\title{Generalized Forward-Backward Splitting}
\author{
Hugo Raguet$^1$ \quad Jalal Fadili$^2$ \quad Gabriel Peyr\'e$^1$\\
\begin{tabular}{c}
\\
\\
$^1$Ceremade\\
CNRS-Universit\'e Paris-Dauphine\\
Pl. De Lattre De Tassigny,\\
75775 Paris Cedex 16\\
France\\
{\small \url{{raguet,peyre}@ceremade.dauphine.fr}}\\
\end{tabular}
\begin{tabular}{c}
\\
\\
$^2$GREYC\\
CNRS-ENSICAEN-Universit\'e de Caen\\
6, Bd du Mar\'echal Juin,\\
14050 Caen Cedex\\
France\\
{\small \url{Jalal.Fadili@greyc.ensicaen.fr}}\\
\end{tabular}
}
\begin{document}

\maketitle

\begin{abstract}
This paper introduces the generalized forward-backward splitting algorithm for minimizing convex functions of the form $F + \sum_{i=1}^n G_i$, where $F$ has a Lipschitz-continuous gradient and the $G_i$'s are simple in the sense that their Moreau proximity operators are easy to compute. While the forward-backward algorithm cannot deal with more than $n = 1$ non-smooth function, our method generalizes it to the case of arbitrary $n$. Our method makes an explicit use of the regularity of $F$ in the forward step, and the proximity operators of the $G_i$'s are applied in parallel in the backward step. This allows the generalized forward-backward to efficiently address an important class of convex problems. We prove its convergence in infinite dimension, and its robustness to errors on the computation of the proximity operators and of the gradient of $F$. Examples on inverse problems in imaging demonstrate the advantage of the proposed methods in comparison to other splitting algorithms.
\end{abstract}

\newcommand{\Ademi}{\Aa\pa{ \frac{1}{2} }}

\newcommand{\gainf}{\ga_{\!\infty}}
\newcommand{\bgainf}{\bga_{\!\infty}}
\newcommand{\bgaA}{\bga \! \bA}
\newcommand{\bgatA}{{\bga}_{t} \! \bA}
\newcommand{\bgainfA}{{\bgainf} \! \bA}

\newcommand{\JNS}{J_{N_{\bSs}}}
\newcommand{\RNS}{R_{N_{\bSs}}}
\newcommand{\JgA}{J_{\bgaA}}
\newcommand{\RgA}{R_{\bgaA}}
\newcommand{\JgAi}{J_{\frac{\ga}{\om_i} A_i}}
\newcommand{\RgAi}{R_{\frac{\ga}{\om_i} A_i}}
\newcommand{\JgiAi}{J_{\ga_i A_i}}
\newcommand{\RgiAi}{R_{\ga_i A_i}}

\newcommand{\Tb}{\bT_{1,\bga}}
\newcommand{\Tf}{\bT_{2,\ga}}
\newcommand{\Tbt}{\bT_{1,\bga_t}}
\newcommand{\Tft}{\bT_{2,\ga_t}}
\newcommand{\Tbb}{\bT_{1,\bar{\bga}}}
\newcommand{\Tfb}{\bT_{2,\bar{\ga}}}
\newcommand{\Tbinf}{\bT_{1,\bgainf}}
\newcommand{\Tfinf}{\bT_{2,\gainf}}
\newcommand{\bBJ}{\bB\JNS}
\newcommand{\Apga}{\bA_{\bga}\prim}
\newcommand{\Apgat}{\bA_{\bga_t}\prim}
\newcommand{\Apgainf}{\bA_{\bgainf}\prim}
\newcommand{\projS}{\proj_{\bSs}}
\newcommand{\projP}{\proj_{\bSs^\bot}}
\newcommand{\pSs}[1]{{#1}^{\bSs}}
\newcommand{\pSp}[1]{{#1}^{\boldsymbol \bot}}
\newcommand{\Fx}{\mathbf{F}}

\newcommand{\musp}{\mu}
\newcommand{\mutv}{\nu}
\newcommand{\normbk}[1]{\norm{#1}_{1,2}^{\Bb}}
\newcommand{\normtv}[1]{\norm{#1}_{\mathrm{TV}}}
\newcommand{\normbki}[1]{\norm{#1}_{1,2}^{\Bb_i}}
\newcommand{\normbktv}[1]{\norm{#1}_{1,2}^{\Bb_{\mathrm{TV}}}}

\newcommand{\RNN}{\Ii}
\newcommand{\RNJ}{\Ii^J}
\newcommand{\RG}{\Ii^2}
\newcommand{\Ks}{K}
\newcommand{\Mr}{M}

\newcommand{\gradient}{\nabla_{\Ii}}

\newcommand{\GFB}{\textsf{GFB}\xspace}
\newcommand{\DR}{\textsf{DR}\xspace}
\newcommand{\ChPo}{\textsf{ChPo}\xspace}
\newcommand{\CoPe}{\textsf{CoPe}\xspace}
\newcommand{\HPE}{\textsf{HPE}\xspace}

\section{Introduction}

Throughout this paper, $\Hh$ denotes a real Hilbert space endowed with scalar product $\dprod{\cdot}{\cdot}$ and associated norm $\norm{\cdot}$, and $n$ is a positive integer. We consider the following minimization problem
\begin{equation}\label{minFnG}
 \umin{x \in \Hh} \{\Psi(x) \eqdef F(x)+\sum_{i=1}^{n} G_i(x)\},
\end{equation}
where all considered functions belong to the class $\Ga_0(\Hh)$ of lower semicontinuous, proper (its domain is non-empty) and convex functions from $\Hh$ to $]\minfty,\pinfty]$.

\subsection{State-of-the-Art in Splitting Methods}

The decomposition \eqref{minFnG} is fairly general, and a wide range of iterative algorithms takes advantage of the specific properties of the functions in the summand. One crucial property is the possibility to compute the associated  proximity operators \cite{Moreau65}, defined as
\begin{equation}\label{proxdef}
  \prox_{G}(x) \eqdef \uargmin{y \in \Hh} \frac{1}{2} \norm{x - y}^2 + G(y).
\end{equation}
This is in itself a convex optimization problem, which can be solved efficiently for many functions, \eg when the solution, unique by strong convexity, can be written in closed form. Such functions are referred to as ``simple''.

Another important feature is the differentiability of the functional to be minimized. However, gradient-descent approaches do not apply as soon as one of the functions $G_i$ is non-smooth. For ${n \equiv 1}$ and $G_1$ simple, the forward-backward algorithm circumvents this difficulty if $F$ is differentiable with a Lipschitz-continuous gradient. This scheme consists in performing alternatively a gradient-descent (corresponding to an explicit step on the function $F$) followed by a proximal step (corresponding to an implicit step on the function $G_1$). Such a scheme can be understood as a generalization of the projected gradient method. This algorithm has been well studied~\cite{Mercier79,Gabay83,Tseng91,ChenRockafellar97,Tseng00,CombettesWajs05,Bredies08}. Accelerated multistep versions have been proposed~\cite{Nesterov07,Tseng08,BeckTeboulle09}, that enjoy a faster convergence rate of $O(1/t^2)$ on the objective $\Psi$.

Other splitting methods do not require any smoothness on some part of the composite functional $\Psi$. The Douglas-Rachford~\cite{DouglasRachford56} and Peaceman-Rachford~\cite{PeacemanRachford55} schemes were developed to minimize $G_1(x)+G_2(x)$, provided that $G_1$ and $G_2$ are simple \cite{LionsMercier79,Lieutaud69,EcksteinBertsekas92,Combettes04} and rely only on the use of proximity operators. The backward-backward algorithm~\cite{Lions78,Passty79,AckerPrestel80,Bauschke05,Combettes04} can be used to minimize $\Psi(x) = G_1(x)+G_2(x)$ when the functions involved are the indicator functions of non-empty closed convex sets, or involve Moreau envelopes.  Interestingly, if one of the functions $G_1$ or $G_2$ is a Moreau envelope and the other is simple, the forward-backward algorithm amounts to a backward-backward scheme.

If $L$ is a bounded injective linear operator, it is possible to minimize $\Psi(x) = G_1 \circ L(x) + G_2(x)$ by applying these splitting schemes on the Fenchel-Rockafellar dual problem. It was shown that applying the Douglas-Rachford scheme leads to the alternating direction method of multipliers (ADMM)~\cite{FortinGlowinski83, Gabay83,GabayMercier76,GlowinskiLeTallec89,EcksteinBertsekas92}. For non-necessarily injective $L$ and $G_2$ strongly convex with a Lipschitz-continuous gradient, the forward-backward algorithm can be applied to the Fenchel-Rockafellar dual \cite{FadiliPeyre10,CombettesDualization10}. Dealing with an arbitrary bounded linear operator $L$ can be achieved using primal-dual methods motivated by the classical Kuhn-Tucker theory. Starting from methods to solve saddle function problems such as the Arrow-Hurwicz  method \cite{ArrowHurwicz58} and its modification \cite{Popov80}, the extragradient method \cite{Korpelevich76}, this problem has received a lot of attention more recently \cite{ChenTeboulle94,Tseng97,Solodov04,MonteiroSvaiter10,ChambollePock11,Briceno-AriasCombettes11}.

It is also possible to extend the Douglas-Rachford algorithm to an arbitrary number $n > 2$ of simple functions. Inspired by the method of partial inverses \cite[Section~5]{Spingarn83}, most methods rely either explicitly or implicitly on introducing auxiliary variables and bringing back the original problem to the case $n = 2$ in the product space $\Hh^{n}$. Doing so yields iterative schemes in which one performs independent parallel proximal steps on each of the simple functions and then computes the next iterate by essentially averaging the results. Variants have been proposed in \cite{CombettesPesquet08} and \cite{EcksteinSvaiter09}, who describe a general projective framework that does not reduce the problem to the case $n = 2$. Note however that these extensions do not apply to the forward-backward scheme that can only handle $n \equiv 1$. It is at the heart of this paper to present such an extension. 

Recently proposed methods extend existing splitting schemes to handle the sum of any number of $n \geq 2$ composite functions of the form $G_i = H_i \circ L_i$, where the $H_i$'s are simple and the $L_i$'s are bounded linear operators. Let us denote $\adj{L_i}$ the adjoint operator of $L_i$. If $L_i$ satisfies $L_i  \adj{L_i} = \nu \Id$ for any $\nu > 0$ (it is a so-called \textit{tight frame}), $H_i \circ L_i$ is simple as soon as $H_i$ is simple and $\adj{L_i}$ is easy to compute \cite{Combettes2007a}. This case thus reduces to the previously reviewed ones. If $L_i$ is not a tight frame but $\pa{\Id + \adj{L_i} L_i}$ or $\pa{\Id + L_i \adj{L_i}}$ is easily invertible, it is again possible to reduce the problem to the previous cases by introducing as many auxiliary variables as the number of $L_i$'s each belonging to the range of $L_i$. Note however that, if solved with the Douglas-Rachford algorithm on the product space, the auxiliary variables are also duplicated, which would increase significantly the dimensionality of the problem. Some dedicated parallel implementations were specifically designed for the case where $\pa{\sum_i \adj{L_i} L_i}$ or $\pa{\sum_i L_i \adj{L_i}}$ is (easily) invertible, see for instance \cite{Eckstein94,PesquetPustelnik11}. If the $L_i$'s satisfy none of the above properties, it is still possible to call on primal-dual methods, either by writing $\Psi(x) = \sum_{i=1}^n H_i(L_ix) = G  (Lx)$ with $L(x) = \pa{L_i(x)}_i$ and $G\bpa{\pa{x_i}_i} = \sum_i H_i(x_i)$, see for instance~\cite{Dupe11b}; or $\Psi(\pa{x_i}_i) = \iota_{\bSs}(\pa{x_i}_i) + \sum_i H_i(L_ix_i)$ \cite{Briceno-AriasCombettes11}, where $\bSs$ is the closed convex set defined in \sref{subsec-defn-prop-other}.

In spite of the wide range of already existing proximal splitting methods, none seems satisfying to address explicitly the case where $n > 1$ and $F$ is smooth but not necessarily simple. A workaround that has been proposed previously used nested algorithms to compute the proximity operator of $\sum_i G_i$ within sub-iterations, see for instance~\cite{Dupe2009,Chaux09}; this leads to practical as well as theoretical difficulties to select the number of sub-iterations. More recently, \cite{MonteiroSvaiter10} proposed an algorithm for minimizing $\Psi(x) = F(x)+G(x)$ under linear constraints. We show in \sref{discussion} how this can be adapted to adress the general problem \eqref{minFnG} while achieving full splitting of the proximity operators of the $G_i$'s and using the gradient of $F$. It suffers however from limitations, in particular the introduction of many auxiliary variables and the fact that the gradient descent can't be directly applied to the minimizer; see \sref{discussion} and \ref{numeric} for details. The generalized forward-backward algorithm introduced in this paper is intended to avoid all those shortcomings.

As this paper was being finalized, the authors in \cite{CombettesPesquet11} independently developed a primal-dual algorithm to solve a class of problems that cover those we consider here. Their approach and algorithm are however very different from ours in many important ways. We will provide a detailed comparison with this work in \sref{discussion} and will also show on numerical experiments in \sref{numeric} that our algorithm seems more adapted for problems of the form \eqref{minFnG}.

\subsection{Applications in Image Processing}

Many imaging applications require solving ill-posed inverse problems to recover high quality images from low-dimensional and noisy observations. These challenging problems necessitate the use of regularization through prior models to capture the geometry of natural signals, images or videos. The resolution of the inverse problem can be achieved by minimizing objective functionals, with respect to a high-dimensional variable, that takes into account both a fidelity term to the observations and regularization terms reflecting the priors. Clearly, such functionals are composite by construction. \sref{numeric} details several examples of such inverse problems.  

In many situations, this leads to the optimization of a convex functional that can be split into the sum of convex smooth and non-smooth terms. The smooth part of the objective is in some cases a data fidelity term and reflects some specific knowledge about the forward model, \ie the noise and the measurement/degradation operator. This is for instance the case if the operator is linear and the noise is additive Gaussian, in which case the data fidelity is a quadratic function. The most successful regularizations that have been advocated are non-smooth, which typically allow to preserve sharp and intricate structures in the recovered data. Among such priors, sparsity-promoting ones have become popular, \eg the $\ell_1$-norm of coefficients in a wisely chosen dictionary \cite{Mallat99}, or total variation (TV) prior~\cite{ROF92}. To better model the data, composite priors can be constructed by summing several suitable regularizations, see for instance the morphological diversity framework~\cite{FadiliStarckBook09}. The proximity operator of the $\ell_1$-norm penalization is a simple soft-thresholding~\cite{Donoho95}, whereas the use of complex or mixed regularization priors justifies the splitting of non-smooth terms in several simpler functions (see \sref{numeric} for concrete examples).

The composite structure of convex optimization problems raising when solving inverse problems in the form of a sum of simple and/or smooth functions involving linear operators explains the popularity of proximal splitting schemes in imaging science. Depending on the structure of the objective functional as detailed in the previous section, one can resort to the appropriate splitting algorithm. For instance, the forward-backward algorithm and its modifications has become popular for sparse regularization with a smooth data fidelity, see for instance~\cite{wave:nowak03,DaubechiesDemol04,CombettesWajs05,Fadili06,chauxframes,BeckTeboulle09,Briceno-Arias09}. The Douglas-Rachford and its parallelized extensions were also used in a variety of inverse problems implying only non-smooth functions, see for instance \cite{Combettes2007a,CombettesPesquet08,Dupe2009,Chaux09,Briceno-Arias10,Dupe11a,Dupe11c,Pustelnik11}. The ADMM (which is nothing but Douglas-Rachford on the dual) was also applied to some linear inverse problems in \cite{Figueiredo10a,Figueiredo10b}. Primal-dual schemes \cite{ChambollePock11,Dupe11b} are among the most flexible schemes to handle more complicated priors. The interested reader may refer to \cite[Chapter 7]{FadiliStarckBook09} and \cite{CombettesPesquet09} for extensive reviews.

\subsection{Contributions and Paper Organization}

This paper introduces a novel generalized forward-backward algorithm to solve \eqref{minFnG} when $F$ is convex with a Lipschitz continuous gradient, and the $G_i$'s are convex and simple. The algorithm achieves full splitting where all operators are used separately: an explicit step for $\nabla F$ (single-valued) and a parallelized implicit step through the proximity operators of the $G_i$'s. We prove convergence of the algorithm as well as its robustness to errors that may contaminate the iterations. To the best of our knowledge, it is among the first algorithms to tackle the case where $n > 1$ and $F$ is smooth (see \sref{discussion} for relation to a recent work developed in parallel to ours). Although our numerical results are reported only on imaging applications, the algorithm may prove useful for many other applications such as machine learning or statistical estimation.

\sref{algorithm} presents the algorithm and state our main theoretical result. \sref{preliminaries}, that can be skipped by experienced readers, sets some necessary material from the framework of monotone operator theory. \sref{proof} reformulates the generalized forward-backward algorithm for finding the zeros of the sum of maximal monotone operators, and proves its convergence and its robustness. Special instances of the algorithm, its potential extensions and discussion of its relation to two alternatives in the literature are given in \sref{discussion}. Numerical examples are reported in \sref{numeric} to show the usefulness of this approach for applications to imaging problems.

\section{Generalized For\-ward-Back\-ward Al\-go\-rithm\\ for Mi\-ni\-mi\-za\-tion Problems}
\label{algorithm}

We consider problem \eqref{minFnG} where all functions are in $\Ga_0(\Hh),$ $F$ is differentiable on $\Hh$ with $1/\beta$-Lipschitz gradient where $\beta \in ]0,\pinfty[$, and for all $i$, $G_i$ is simple. We also assume the following:
\begin{enumerate}[label={\rm (H\arabic{*})}, ref={\rm (H\arabic{*})}]
\item \label{H:argmin} The set of minimizers of \eqref{minFnG} $\argmin(\Psi)$ is non-empty;
\item \label{H:sri} The domain qualification condition holds, \ie
\[
(0,\ldots,0) \in \sri\{(x-y_1,\ldots,x-y_n) ~ \big | x \in \Hh \sobjs{and} \foralls i, \, y_i \in \dom(G_i)\}~,
\]
\end{enumerate}
where $\dom(G_i) \eqdef \{x \in \Hh \big | G_i(x) < \pinfty\}$ and $\sri(C)$ is the strong relative interior of a non-empty convex subset $C$ of $\Hh$ \cite{BauschkeCombettes11}.
Under \ref{H:argmin}-\ref{H:sri}, it follows from \cite[Theorem~16.2 and Theorem~16.37(i)]{Rockafellar70,BauschkeCombettes11} that
\[
\emptyset \neq \argmin(\Psi) = \zer(\partial \Psi) = \zer \left( \nabla F + \textsum{i}{} \partial G_i \right)~,
\]
where $\partial G_i$ denotes the subdifferential of $G_i$ and $\zer(A)$ is the set of zeros of a set-valued map $A$ (see \dref{def:setvalued} in \sref{subsec-defn-prop}).
Therefore, solving \eqref{minFnG} is equivalent to
\begin{equation}\label{cns}
\sobjs{Find} x \in \Hh \sobjs{such that} 0 \in \nabla F(x) + \sum_i \partial G_i(x) ~.
\end{equation}

The generalized forward-backward we propose to minimize \eqref{minFnG} (or equi\-valently to solve \eqref{cns}) is detailed in \aref{gfb-algo}. 

\begin{algorithm}[H]
\caption{Generalized Forward-Backward Algorithm for solving \eqref{minFnG}.\newline
$\inv{\be} \in ]0,\pinfty[$ is the Lipschitz constant of $\nabla F$; $I_\la$ is defined in \tref{gfb-algo-thm}.}
{\noindent{\bf{Require}}}
 $\begin{array}{ll}
	 {\pa{z_i}_{i \in \bbket{1,n}} \in \, \Hh^n}, & {\pa{\om_i}_{i \in \, \bbket{1,n}} \in {]0,1[}^n \sobjs{s.t.} \sum_{i=1}^n \om_i = 1}, \\
	 \ga_t \in \, ]0,2\be[ ~ \forall t \in \N, & \la_t \in I_\la ~ \forall t \in \N ~. \end{array}$\\
{\noindent{\bf{Initialization}}}\\
$x \leftarrow \sum_i \om_i z_i$;\\
$t \leftarrow 0$.\\
{\noindent{\bf{Main iteration}}}\\
\Repeat{convergence}{
	\For{$i \in \bbket{1,n}$}{
		 $\displaystyle z_i \leftarrow z_i + \la_t \bpa{\prox_{\frac{\ga_t}{\om_i} G_i} \bpa{ 2x - z_i - \ga_t \nabla F(x) } - x}$;
	}
	$x \leftarrow \sum_i \om_i z_i$;\\
	$t \leftarrow t+1$.
}
{\noindent{\bf{Return}}} $x$.
\label{gfb-algo}
\end{algorithm}

To state our main theorem that ensures the convergence of the algorithm and its robustness, for each $i$ let $\Eps_{1,t,i}$ be the error at iteration $t$ when computing $\prox_{\frac{\ga_t}{\om_i} G_i}$ at its argument, and let $\Eps_{2,t}$ be the error at iteration $t$ when applying $\nabla F$ to its argument. \aref{gfb-algo} generates sequences $\pa{z_{i,t}}_{t \in \N}$, $i \in \bbket{1,n}$ and $\pa{x_t}_{t \in \N}$, such that for all $i$ and $t$,
\begin{equation}\label{seq-w-err}
	z_{i,t+1} = z_{i,t} + \la_t \bpa{\prox_{\frac{\ga_t}{\om_i} G_i} \bpa{ 2 x_t - z_{i,t} - 
	\ga_t \pa{ \nabla F(x_t) + \Eps_{2,t} } } + \Eps_{1,t,i} - x_t } ~.
\end{equation}

The following theorem introduces two different sets of assumptions to guarantee convergence. Assumption~\ref{big-la-algo} allows one to use a greater range for the relaxation parameters $\la_t$, while assumptions \ref{var-ga-algo} enables varying gradient-descent step-size $\ga_t$ and ensures strong convergence in the uniformly convex case. Recall that a function $F \in \Ga_0(\Hh)$ is uniformly convex if there exists a non-decreasing function $\phy: [0,\pinfty[ \to [0,\pinfty]$ that vanishes only at 0, such that for all $x$ and $y$ in $\dom(F)$, the following holds
\begin{equation}
\foralls \rho \in ]0,1[, \, F(\rho x + (1-\rho) y) + \rho(1-\rho)\phy(\norm{x-y}) \leq \rho F(x) + (1-\rho)F(y).
\end{equation}

\begin{theorem}
Set $\varlimsup \ga_t = \bar{\ga}$ and define the following assumptions:
\begin{enumerate}[label={\rm (A\arabic{*})}, ref={\rm (A\arabic{*})}]
\setcounter{enumi}{-1}
\item{\label{lim-la-sum-err-algo}}
	\begin{enumerate}[label={\rm (\roman{*})}, ref={\rm (\roman{*})}]
		\item{\label{lim-la-algo}} $0 < \varliminf \la_t \leq \varlimsup \la_t < \min\pa{\frac{3}{2},\frac{1+2\be/\bar{\ga}}{2}}$;
		\item{\label{sum-err-algo}} $\sum_{t=0}^{\pinfty} \norm{\Eps_{2,t}} < \pinfty$, and for all $i$, $\sum_{t=0}^{\pinfty} \norm{\Eps_{1,t,i}} < \pinfty$.
	\end{enumerate} 
\item{\label{big-la-algo}}
	\begin{enumerate}[label={\rm (\roman{*})}, ref={\rm (\roman{*})}]
		\item{\label{fix-ga-algo}} $\foralls t, \, \ga_t = \bar{\ga} \in ]0,2\be[$;
		\item{\label{bound-la-algo}} $I_\la = \left]0,\min\pa{\frac{3}{2},\frac{1+2\be/\bar{\ga}}{2}}\right[$.
	\end{enumerate} 
\item{\label{var-ga-algo}}
	\begin{enumerate}[label={\rm (\roman{*})}, ref={\rm (\roman{*})}]
		\item{\label{lim-ga-algo}} $0 < \varliminf \ga_t \leq \bar{\ga} < 2\be$;
		\item{\label{small-la-algo}} $I_\la = ]0,1]$.
	\end{enumerate}
\end{enumerate} 
Suppose that \ref{H:argmin}, \ref{H:sri} and \ref{lim-la-sum-err-algo} are satisfied. Then, if either \ref{big-la-algo} or \ref{var-ga-algo} is satisfied, $\pa{x_t}_{t \in \N}$ defined in \eqref{seq-w-err} converges weakly towards a minimizer of \eqref{minFnG}. Moreover, if \ref{var-ga-algo} is satisfied and $F$ is uniformly convex, the convergence is strong to the unique global minimizer of \eqref{minFnG}.
\label{gfb-algo-thm}
\end{theorem}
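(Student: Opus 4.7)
The plan is to lift \eqref{cns} to the product space $\Hh^n$ equipped with the weighted inner product $\bigl\langle (x_i)_i, (y_i)_i \bigr\rangle = \sum_{i=1}^n \om_i \dprod{x_i}{y_i}$, in which the diagonal $\bSs = \{(x,\ldots,x) : x \in \Hh\}$ is closed and its orthogonal projection is $\projS \bz = (\sum_i \om_i z_i,\ldots,\sum_i \om_i z_i)$; this is exactly how $x$ is reconstructed from the $z_i$ in Algorithm~\ref{gfb-algo}. On this product space I would introduce the block-diagonal maximal monotone operator $\bA$ defined by $(\bA \bz)_i = (1/\om_i)\,\partial G_i(z_i)$, whose resolvent factorizes componentwise into the proximity operators $\prox_{(\ga/\om_i) G_i}$, together with a second operator whose normal-cone part $N_{\bSs}$ encodes the consensus constraint $z_1 = \cdots = z_n$ and whose smooth part is $\nabla F \circ \projS$ suitably lifted to $\bSs$. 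Using assumption~\ref{H:sri} together with standard qualification and sum rules for maximal monotone operators (cf.\ \cite{BauschkeCombettes11}), one checks that $\projS \bz^\star$ solves \eqref{cns} whenever $\bz^\star$ is a zero of the sum of these two operators, and that such a zero exists under~\ref{H:argmin}.

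The second step is to recognize the recursion \eqref{seq-w-err} as a relaxed, perturbed Krasnosel'ski\u{\i}--Mann iteration of the operator
\[
T_{\ga}\bz \eqdef \bz - \projS \bz + J_{\ga \bA}\bigl(2\projS \bz - \bz - \ga\,\nabla F(\projS \bz)\bigr),
\]
whose fixed-point set coincides with the lift of $\argmin(\Psi)$. Indeed, the $i$th coordinate of $2\projS \bz - \bz$ is $2x - z_i$, exactly the argument fed to the proximal step in Algorithm~\ref{gfb-algo}, and $-\ga\,\nabla F(x)$ is the forward step. The Baillon--Haddad theorem provides $\be$-cocoercivity of $\nabla F$, which combined with firm nonexpansiveness of $J_{\ga \bA}$ and the fact that $2\projS - \mathrm{Id}$ is an isometry making $T_\ga$ averaged with a modulus depending explicitly on $\ga/\be$. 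Convergence of $(\bz^t)$ then follows from established results on inexact relaxed fixed-point iterations of averaged operators with summable errors (e.g.\ \cite{CombettesWajs05}): Fej\'er monotonicity together with Opial's lemma yields weak convergence to a fixed point, and hence weak convergence of $x_t = \projS \bz^t$ to a solution of \eqref{minFnG}. Assumption~\ref{big-la-algo} fixes $\ga$, allowing the wider range of $\la_t$, whereas \ref{var-ga-algo} restricts $\la_t \leq 1$ so that the variable step size is absorbed by a standard limiting argument on the subsequential averagedness constant.

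The main obstacle will be the quantitative derivation of the averagedness modulus of $T_\ga$, on which the admissible range $\la_t \in \bigl]0,\min\bigl(\tfrac{3}{2},\tfrac{1+2\be/\bar\ga}{2}\bigr)\bigr[$ depends: the reflection $2\projS - \mathrm{Id}$ doubles the nonexpansiveness budget while cocoercivity contributes a partial compensation proportional to $\be/\ga$, so a careful $\norm{\cdot}^2$ inequality at each iteration is required rather than a black-box application of averagedness calculus. Finally, the strong convergence claim under uniform convexity of $F$ is obtained by extracting from this same Fej\'er-type inequality an additional summable term of the form $\phi(\norm{x_t - x^\star})$, where $\phi$ is the modulus of uniform convexity; this forces $x_t \to x^\star$ strongly, with $x^\star$ being the unique minimizer of $\Psi$ by strict convexity.
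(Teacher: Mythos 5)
Your overall strategy coincides with the paper's: same weighted product space, same block operator $\bgaA$ with $\bga=\pa{\ga/\om_i}_i$, and the map $T_\ga$ you write down is exactly the paper's operator $\frac{1}{2}\big[\RgA\RNS+\bId\big]\big[\bId-\ga\bBJ\big]$ rewritten in resolvent rather than reflection form. For the stationary case \ref{big-la-algo} your sketch is essentially the paper's argument, with one inaccuracy: the averagedness modulus $\al=\max\pa{\frac{2}{3},\frac{2}{1+2\be/\ga}}$ (which is precisely what produces the bound $\min\pa{\frac{3}{2},\frac{1+2\be/\bar\ga}{2}}$ on $\la_t$) is obtained by the black-box composition rule for averaged operators applied to the firmly non-expansive $\Tb$ and the $\frac{\ga}{2\be}$-averaged $\Tf$; no bespoke norm-squared inequality is needed, contrary to what you anticipate.

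The genuine gap is in the variable step-size case \ref{var-ga-algo}. When $\ga_t$ varies, the iteration is driven by a time-varying family $\Tbt\Tft$ whose fixed-point sets must all be intersected, and ``absorbing the variable step size by a standard limiting argument on the subsequential averagedness constant'' does not address the real difficulty: one must show that every weak cluster point of $\pa{\bz_t}_t$ lies in $\bigcap_t \Fix\Tbt\Tft$, and the demiclosedness principle only applies to a single fixed non-expansive map. The paper resolves this by introducing the maximal monotone operator $\Apga$ with $\Tb=J_{\Apga}$, characterizing its graph explicitly (\pref{Ap-incl}), extracting a subsequence along which $\ga_{t_\tau}\to\gainf$ and $\bBJ\bz_{t_\tau}$ converges strongly, and passing to the limit in the monotonicity inequality so that maximality of $\bgainfA$ identifies the cluster point as a zero of $\Apgainf+\gainf\bBJ$. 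Some argument of this kind is unavoidable and is absent from your plan. Relatedly, for strong convergence the paper does not extract a summable $\phy(\norm{x_t-x})$ term from the Fej\'er inequality; it uses the strong convergence $\bBJ\bz_t\cv\bBJ\bz$ (a by-product of the forward-backward estimates) together with $\bz_t\cvwk\bz$ to get $\bdprod{\bBJ\bz_t-\bBJ\bz}{\bz_t-\bz}\to 0$, and then invokes uniform monotonicity of $\nabla F$. Your route might be repairable, but as stated the non-stationary half of the theorem is not proved.
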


This theorem will be proved after casting it in the more general framework of monotone operator splitting in \sref{proof}.

\begin{remark}
The sufficient condition of strong convergence in \tref{gfb-algo-thm} can be weakened, and other ones can be stated as well. Indeed, the generalized forward-backward algorithm has a structure that bears similarities with the classical forward-backward, since it consists of an explicit forward step, followed by an implicit step where the proximity operators are computed in parallel. In fact, it turns out that the backward step involves a firmly non-expansive operator (see next section), and therefore statements of \cite[Theorem~3.4(iv) and Proposition 3.6]{CombettesWajs05} can be transposed with some care to our algorithm.
\end{remark} 

The formulation of \aref{gfb-algo} is general, but it can be simplified for practical purposes. In particular, the auxiliary variables $z_i$ can all be initialized to $0$, the weights $\om_i$ set equally to $1/n$, and for simplicity the relaxation parameters $\lambda_t$ and the gradient-descent step-size $\ga_t$ can be set constant along iterations. This is typically what has been done in the numerical experiments. 

\section{Monotone Operators and Inclusions}
\label{preliminaries}

The subdifferential of a function in $\Ga_0(\Hh)$ is the best-known example of maximal monotone operator. Therefore, it is natural to extend the generalized forward-backward, \aref{gfb-algo}, to find the zeros of the sum of maximal monotone operators, \ie solve the monotone inclusion \eqref{cns} when the subdifferential is replaced by any maximal monotone operator. This is the goal pursued in \sref{proof} where we provide the proof of a general convergence and robustness theorem whose byproduct is a convergence proof of \tref{gfb-algo-thm}. 

We first begin by recalling some essential definitions and properties of monotone operators that are necessary to our exposition. The interested reader may refer to \cite{Phelps93,BauschkeCombettes11} for a comprehensive treatment.

\subsection{Definitions and Properties}
\label{subsec-defn-prop}

In the following, $A : \Hh \to 2^\Hh$ is a set-valued operator, and $\Id$ is the identity operator on $\Hh$. $A$ is single-valued if the cardinality of $Ax$ is at most 1.

\begin{definition}[Graph, inverse, domain, range and zeros]
\label{def:setvalued}
The \textit{graph} of $A$ is the set $\gra(A) \eqdef \setdef{(x,y) \in \Hh^2}{y \in Ax}$. The \textit{inverse} of $A$ is the operator whose graph is $\gra(\inv{A}) \eqdef \setdef{(x,y) \in \Hh^2}{(y,x) \in \gra(A)}$. The \textit{domain} of $A$ is $\dom(A) \eqdef \setdef{x \in \Hh}{Ax \neq \emptyset}$. The \textit{range} of $A$ is $\ran(A) \eqdef \setdef{y \in \Hh}{\exists x \in \Hh : y \in Ax}$, and its \textit{zeros} set is $\zer(A)\eqdef\setdef{x \in \Hh}{0 \in Ax} = \inv{A} \pa{0}$.
\end{definition}

\begin{definition}[Resolvant and reflection operators]
The \textit{resolvant} of $A$ is the operator $J_A \eqdef \inv{\bpa{\Id+A}}$. The \textit{reflection operator} associated to $J_A$ is the operator $R_A \eqdef 2 J_A - \Id$.
\end{definition}

\begin{definition}[Maximal monotone operator]
$A$ is \textit{monotone} if 
\[
\foralls x,y \in \Hh, u \in Ax \sobjs{and} v \in Ay \Rightarrow \dprod{u - v}{x - y} \geq 0 ~.
\]
It is moreover \textit{maximal} if its graph is not strictly contained in the graph of any other monotone operator.
\end{definition}

\begin{definition}[Non-expansive and $\al$-averaged operators]
$A$ is \textit{non-expan\-sive} if 
\[ 
\foralls x,y \in \Hh, u \in Ax \sobjs{and} v \in Ay \Rightarrow \norm{u - v} \leq \norm{x - y} ~.
\]
For $\al \in ]0,1[$, $A$ is \textit{$\al$-averaged} if there exists $R$ non-expansive such that $A = (1-\al) \Id + \al R$. We denote $\Aa(\al)$ the class of $\al$-averaged operators on $\Hh$. In particular, $\Ademi$ is the class of \textit{firmly non-expansive} operators.
\end{definition}

Note that non-expansive operators are necessarily single-valued and 1-Lipschitz continuous, and so are $\al$-averaged operators since they are also non-expansive. The following lemma gives some useful characterizations of firmly non-expansive operators.

\begin{lemma} Let $A: \dom \pa{A} = \Hh \to \Hh$. The following statements are equivalent:
\begin{enumerate}[label={\rm (\roman{*})}, ref={\rm (\roman{*})}]
\item{\label{fne}} $A$ is firmly non-expansive;
\item{\label{ref}} $2A-\Id$ is non-expansive;
\item{\label{norm-fne}} $\foralls x,y \in \Hh, \, \norm{Ax - Ay}^2 \leq \dprod{Ax - Ay}{x - y}$;
\item{\label{resolv-fne}} $A$ is the resolvent of a maximal monotone operator $A\prim$, \ie $A = J_{A\prim}$.
\end{enumerate} 
\label{lem-fne}
\end{lemma}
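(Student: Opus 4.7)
My plan is to prove the equivalences cyclically, with the monotone-operator characterization \ref{resolv-fne} handled separately against one of the other three. Concretely, I would establish \ref{fne}~$\Leftrightarrow$~\ref{ref}, then \ref{ref}~$\Leftrightarrow$~\ref{norm-fne}, and finally \ref{norm-fne}~$\Leftrightarrow$~\ref{resolv-fne}.

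The equivalence \ref{fne}~$\Leftrightarrow$~\ref{ref} is essentially the definition of $\Ademi$: $A \in \Ademi$ means $A = \tfrac{1}{2}\Id + \tfrac{1}{2} R$ with $R$ non-expansive, and solving for $R$ yields $R = 2A - \Id$, so one direction is immediate and the other is obtained by setting $R = 2A-\Id$. For \ref{ref}~$\Leftrightarrow$~\ref{norm-fne}, the only computation needed is the polarization-type identity
\[
\norm{(2A-\Id)x - (2A-\Id)y}^2 = 4 \norm{Ax-Ay}^2 - 4 \dprod{Ax-Ay}{x-y} + \norm{x-y}^2 ~,
\]
from which one reads that $2A-\Id$ is non-expansive if and only if $\norm{Ax-Ay}^2 \leq \dprod{Ax-Ay}{x-y}$ for all $x,y$.

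The main substance lies in \ref{norm-fne}~$\Leftrightarrow$~\ref{resolv-fne}. For $\Leftarrow$, assume $A = J_{A\prim}$ with $A\prim$ monotone. For any $x,y \in \Hh$, set $a = Ax$, $b = Ay$; by definition of the resolvent $x - a \in A\prim a$ and $y - b \in A\prim b$, and monotonicity of $A\prim$ gives $\dprod{(x-a) - (y-b)}{a-b} \geq 0$, which rearranges to \ref{norm-fne}. For $\Rightarrow$, the natural candidate is $A\prim \eqdef \inv{A} - \Id$, since then $\Id + A\prim = \inv{A}$ and hence $J_{A\prim} = A$. I still have to verify that this $A\prim$ is maximal monotone. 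Monotonicity follows by a short calculation: for $u \in A\prim x$ and $v \in A\prim y$, write $a = x+u, b = y+v$ so that $Aa = x$, $Ab = y$, and then
\[
\dprod{u-v}{x-y} = \dprod{(a-b) - (Aa - Ab)}{Aa - Ab} = \dprod{a-b}{Aa-Ab} - \norm{Aa-Ab}^2 \geq 0
\]
by \ref{norm-fne}. Maximality will be the main obstacle: here I would invoke Minty's theorem, which characterizes maximality of a monotone operator $A\prim$ by $\ran(\Id + A\prim) = \Hh$; since $\Id + A\prim = \inv{A}$ and $\dom(A) = \Hh$ by hypothesis, this range is all of $\Hh$ and maximality follows. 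This step is where the full-domain assumption on $A$ is really used; without it the construction of $A\prim$ still makes sense, but maximality could fail.
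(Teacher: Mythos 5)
Your proposal is correct and, where it overlaps with the paper, takes the same approach: the equivalence (i)$\Leftrightarrow$(ii) is exactly the paper's argument (the definition of $\Ademi$), while for the remaining equivalences the paper simply cites Zarantonello and Minty, and your polarization identity for (ii)$\Leftrightarrow$(iii) and your construction $A\prim = \inv{A} - \Id$ with Minty's surjectivity criterion for (iii)$\Leftrightarrow$(iv) are precisely the standard arguments behind those citations. Your version is therefore a self-contained filling-in of what the paper outsources to references, and your remark that the full-domain hypothesis is what guarantees $\ran(\Id + A\prim) = \Hh$ correctly identifies where that hypothesis is used.
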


\begin{proof}
\ref{fne} $\Leftrightarrow$ \ref{ref}, $A \in \Ademi \Leftrightarrow A = \frac{\Id + R}{2}$ for some $R$ non-expansive.
\ref{fne} $\Leftrightarrow$ \ref{norm-fne}, see \cite{Zarantonello71}.
\ref{fne} $\Leftrightarrow$ \ref{resolv-fne}, see \cite{Minty62}.
\end{proof}

We now summarize some properties of the subdifferential that will be useful in the sequel.

\begin{lemma} Let $F : \Hh \to \R$ be a convex differentiable function, with $1/\be$-Lipschitz continuous gradient, $\be \in ]0,\pinfty[$, and let $G : \Hh \to ]\minfty,\pinfty]$ be a function in $\Ga_0(\Hh)$. Then,
\begin{enumerate}[label={\rm (\roman{*})}, ref={\rm (\roman{*})}]
\item{\label{baillon-haddad}} $\be \nabla F \in \Ademi$, \ie is firmly non-expansive;
\item{\label{subdiff-mm}} $\partial G$ is maximal monotone;
\item{\label{moreau}} The resolvent of $\partial G$ is the proximity operator of $G$, \ie $\prox_{G} = J_{\partial G}$.
\end{enumerate} 
\label{lem-subdiff}
\end{lemma}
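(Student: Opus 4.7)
All three statements are classical, so my plan is mainly to identify the right reference or characterization for each and assemble them, rather than redoing machinery from scratch.

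For \ref{baillon-haddad}, the plan is to invoke the Baillon--Haddad theorem: for a convex Fréchet-differentiable $F$ whose gradient is $L$-Lipschitz, $\nabla F$ is $(1/L)$-cocoercive, meaning $\dprod{\nabla F(x) - \nabla F(y)}{x-y} \geq (1/L)\norm{\nabla F(x)-\nabla F(y)}^2$. Writing this with $L = 1/\be$ and multiplying by $\be$ gives exactly the characterization \ref{norm-fne} in \lref{lem-fne} applied to $\be \nabla F$, so $\be \nabla F \in \Ademi$. If a self-contained argument is wanted, I would prove cocoercivity by applying the descent lemma and the convex conjugate duality ($\nabla F$ and $\nabla F^*$ are inverse up to a smoothness/strong-convexity duality), but citing Baillon--Haddad avoids that detour.

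For \ref{subdiff-mm}, the plan is to appeal to Minty's surjectivity theorem combined with Rockafellar's classical result. Monotonicity of $\partial G$ follows immediately from the subgradient inequality $G(y) \geq G(x) + \dprod{u}{y-x}$ written twice and added. Maximality is the content of Moreau--Rockafellar: for $G \in \Ga_0(\Hh)$, $\partial G$ is maximal monotone. This can be proved by showing $\ran(\Id + \partial G) = \Hh$ via the Moreau proximity problem below, and then invoking Minty's characterization (a monotone operator $A$ is maximal iff $\ran(\Id+A)=\Hh$).

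For \ref{moreau}, the plan is to directly unfold the definition \eqref{proxdef}. Because $y \mapsto \frac{1}{2}\norm{x-y}^2 + G(y)$ is proper, lsc, strongly convex, and coercive (as $G \in \Ga_0(\Hh)$), it admits a unique minimizer $p = \prox_G(x)$. Fermat's rule gives $0 \in \partial\bpa{\tfrac{1}{2}\norm{x-\cdot}^2 + G}(p) = p - x + \partial G(p)$, hence $x \in (\Id + \partial G)(p)$, i.e.\ $p = J_{\partial G}(x)$. The sum rule for the subdifferential applies without qualification issues here because $\tfrac{1}{2}\norm{x-\cdot}^2$ is finite and continuous on all of $\Hh$. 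As a byproduct, since every $x\in\Hh$ has a (unique) preimage under $\Id + \partial G$, we obtain $\ran(\Id+\partial G)=\Hh$, which closes the loop for the maximality argument in \ref{subdiff-mm}.

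No step is a real obstacle; the only care needed is the qualification condition in \ref{moreau} when invoking the sum rule for the subdifferential, which is trivial here because one summand is everywhere finite and continuous.
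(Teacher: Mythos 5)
Your proposal is correct and takes essentially the same route as the paper, which simply cites the Baillon--Haddad theorem for \ref{baillon-haddad}, Rockafellar for \ref{subdiff-mm}, and Moreau for \ref{moreau}; you merely spell out the standard arguments behind those citations (cocoercivity matching the characterization \ref{norm-fne} of \lref{lem-fne}, Minty's surjectivity criterion, and Fermat's rule with the continuity qualification for the sum rule). The added detail is accurate and the Minty-based loop between \ref{subdiff-mm} and \ref{moreau} is a valid, standard way to establish maximality.
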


\begin{proof}
\ref{baillon-haddad} This is Baillon-Haddad theorem \cite{BaillonHaddad77}.
\ref{subdiff-mm} See \cite{Rockafellar70}.
\ref{moreau} See \cite{Moreau65}.
\end{proof}

We thus consider in the following $n$ maximal monotone operators \linebreak ${A_i : \Hh \to 2^{\Hh}}$ indexed by $i \in \bbket{1,n}$, and a (single-valued) operator $B : \Hh \to \Hh$ and $\be \in ]0,\pinfty[$ such that $\be B \in \Ademi$. Therefore, solving \eqref{cns} can be translated in the more general language of maximal monotone operators as solving the monotone inclusion
\begin{equation}\label{inclusion}
\sobjs{Find} x \in \Hh \sobjs{such that} 0 \in B x + \sum_i A_i x,
\end{equation}
where it is assumed that $\zer \left( B + \sum_i A_i \right) \neq \emptyset$.

\subsection{Product Space}
\label{subsec-defn-prop-other}

The previous definitions being valid for any real Hilbert space, they also apply to the product space $\Hh^n$ endowed with scalar product and norm derived from the ones associated to $\Hh$.  

Let $\pa{\om_i}_{i \in \bbket{1,n}} \in {]0,1[}^n$ such that $\sum_{i=1}^n \om_i = 1$. We consider $\bHh \eqdef \Hh^n$ endowed with the scalar product $\bdprod{\cdot}{\cdot}$, defined as
\begin{equation*}
  \foralls \bx = \pa{x_i}_i, \by = \pa{y_i}_i \in \bHh, \quad \bdprod{\bx}{\by} = \sum_{i=1}^n \om_i \dprod{x_i}{y_i}
\end{equation*}
and with the corresponding norm $\bnorm{\cdot}$. $\bSs \subset \bHh$ denotes the closed convex set defined by $\bSs \eqdef \setdef{ \bx = \pa{x_i}_i \in \bHh}{x_1 = x_2 = \cdots = x_n}$, whose orthogonal complement is the closed linear subspace $\bSs^\bot = \setdef{ \bx = \pa{x_i}_i \in \bHh}{\sum_i \om_i x_i = 0}$. We denote by $\bId$ the identity operator on $\bHh$, and we define the canonical isometry 
\[ \bC: \Hh \to \bSs, x \mapsto (x,\ldots,x) ~. \]
${ \iota_{\bSs} : \bHh \to ]\minfty,\pinfty] }$ and $N_{\bSs}: \bHh \to 2^{\bHh}$ are respectively the indicator function and the normal cone of $\bSs$, that is
\[
\iota_{\bSs}(\bx) \eqdef 
\begin{cases}
 0			& \sobjs{if}  \bx \in \bSs~, \\ 
 \pinfty		& \sobjs{otherwise}~,
 \end{cases} \quad \sobjs{and} \quad
N_{\bSs}(\bx) \eqdef 
\begin{cases}
 \bSs^\bot  & \sobjs{if}  \bx \in \bSs~, \\ 
 \emptyset   & \sobjs{otherwise} ~.
 \end{cases}
\]
Since $\bSs$ is non-empty closed and convex, it is straightforward to see that $N_{\bSs}$ is maximal monotone. To lighten the notation in the sequel, we introduce the following concatenated operators. For every $i \in \bbket{1,n}$, let $A_i$ and $B$ as defined in \eqref{inclusion}. For $\bga = \pa{\ga_i}_{i \in \bbket{1,n}} \in {]0,\pinfty[}^n$, we define ${\bgaA : \bHh \to 2^{\bHh}}, \bx = (x_i)_i \mapsto \bigtimes_{i=1}^n \ga_i A_i(x_i)$, \ie its graph is
\begin{eqnarray*}
\gra \pa{\bgaA} & \eqdef & \bigtimes_{i=1}^n \gra \pa{\ga_i A_i}\\
	      & =      & \setdef{(\bx,\by) \in {\bHh}^2}{\bx = (x_i)_i, \by = (y_i)_i, \sobjs{and} \foralls i, \, y_i \in \ga_i A_i x_i}~,
\end{eqnarray*}
and $\bB : \bHh \to \bHh, \bx = (x_i)_i \mapsto ( B x_i )_i$.

Using the maximal monotonicity of $A_1, \ldots,A_n$ and $B$ it is an easy exercise to establish that $\bgaA$ and $\bB$ are maximal monotone on $\bHh$.

\section{Generalized Forward-Backward Algorithm\\for Monotone Inclusions}
\label{proof}

Now that we have set all necessary material, we are ready to solve the monotone inclusion \eqref{inclusion}. First, we derive an equivalent \textit{fixed point equation} satisfied by any solution of \eqref{inclusion}. From this, we draw an algorithmic scheme and prove its convergence towards a solution, as well as its robustness to errors. Finally, we derive the proof of \tref{gfb-algo-thm}.

\subsection{Fixed Point Equation}

From now on, we denote the set of \textit{fixed points} of an operator $\bT : \bHh \to \bHh$ by $\Fix \bT \eqdef \setdef{\bz \in \bHh}{\bT \bz = \bz}$.

\begin{proposition} Let $\pa{\om_i}_{i \in \bbket{1,n}} \in {]0,1[}^n$. For any $\ga > 0$, $x \in \Hh$ is a solution of \eqref{inclusion} if and only if there exists $(z_i)_{i \in \bbket{1,n}} \in \Hh^n$ such that
\begin{equation}\label{eq-zi}
  \left\{ \begin{array}{L}
	\foralls i, \, z_i = \RgAi ( 2x - z_i - \ga B x ) - \ga B x ~, \\
	x = \sum_i \om_i z_i ~.
\end{array}\right.
\end{equation}
\label{fix-zi}
\end{proposition}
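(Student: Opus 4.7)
The proposition is essentially a change-of-variables identity, so I would prove both implications by unpacking the reflection operator and then using the weighted averaging constraint $x=\sum_i\omega_i z_i$ to reassemble a sum of subgradients.

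\textbf{Step 1: reduce the reflection equation to a resolvent equation.} Setting $w_i \eqdef 2x - z_i - \ga Bx$, I would rewrite $z_i = R_{\frac{\ga}{\om_i}A_i}(w_i)-\ga Bx$ as $z_i+\ga Bx = R_{\frac{\ga}{\om_i}A_i}(w_i) = 2J_{\frac{\ga}{\om_i}A_i}(w_i)-w_i$. Solving for the resolvent gives $J_{\frac{\ga}{\om_i}A_i}(w_i) = \frac{(z_i+\ga Bx)+w_i}{2} = x$, so the first line of \eqref{eq-zi} is equivalent to $J_{\frac{\ga}{\om_i}A_i}(2x-z_i-\ga Bx)=x$. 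Unfolding the definition of the resolvent of $\frac{\ga}{\om_i}A_i$, this is in turn equivalent to
\begin{equation*}
\frac{\om_i}{\ga}(x-z_i) - \om_i Bx \in A_i x ~, \qquad i \in \bbket{1,n}.
\end{equation*}

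\textbf{Step 2: the $\Leftarrow$ direction.} Assuming both lines of \eqref{eq-zi}, I would multiply the inclusion above on the left by implicit unity, sum over $i$, and use $\sum_i\om_i=1$ together with $x=\sum_i\om_i z_i$:
\begin{equation*}
\sum_i\left[\frac{\om_i}{\ga}(x-z_i)-\om_i Bx\right] = \frac{1}{\ga}\Big(x-\sum_i\om_i z_i\Big) - Bx = -Bx \in \sum_i A_i x ~,
\end{equation*}
which is exactly $0 \in Bx+\sum_i A_i x$.

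\textbf{Step 3: the $\Rightarrow$ direction.} Given $x$ solving \eqref{inclusion}, there exist $u_i\in A_i x$ with $Bx+\sum_i u_i = 0$. I would define
\begin{equation*}
z_i \eqdef x - \ga Bx - \frac{\ga}{\om_i}u_i ~.
\end{equation*}
A direct substitution gives $\frac{\om_i}{\ga}(x-z_i)-\om_i Bx = u_i \in A_i x$, which by Step 1 yields the first line of \eqref{eq-zi}. For the second line, $\sum_i\om_i z_i = x-\ga Bx-\ga\sum_i u_i = x-\ga(Bx+\sum_i u_i) = x$.

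\textbf{Expected difficulty.} There is no real obstacle here: the work is purely algebraic manipulation of the resolvent/reflection identity, combined with the observation that the averaging constraint $x=\sum_i\om_i z_i$ is precisely what is needed for the weighted sum of the $n$ inclusions to collapse to the single inclusion \eqref{inclusion}. The only point requiring a little care is getting the scaling factors $\om_i$ and $\ga$ to cancel correctly, which is the reason the resolvent is taken at step $\ga/\om_i$ rather than at step $\ga$.
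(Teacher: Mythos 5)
Your proof is correct and follows essentially the same route as the paper: both reduce the reflection equation to the inclusion $\om_i(x-z_i-\ga Bx)\in\ga A_i x$ via the resolvent identity, and then use $\sum_i\om_i=1$ together with the averaging constraint to pass to $0\in Bx+\sum_i A_ix$. If anything, your Steps 2--3 spell out the existence equivalence that the paper states without detail, including the explicit choice $z_i = x-\ga Bx-\frac{\ga}{\om_i}u_i$.
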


\begin{proof}
set $\ga > 0$, we have the equivalence
\begin{eqnarray*}
 0 \in B x + \sum_i A_i x & \Leftrightarrow & \existss \pa{z_i}_i \in \Hh^n :\, %
\left\{ \begin{array}{l}
	\foralls i, \, \om_i \pa{x - z_i - \ga B x} \in \ga A_i x ~, \\
	x = \sum_i \om_i z_i ~.
\end{array}\right.
\end{eqnarray*}
Now,
\begin{eqnarray*}
\om_i \pa{x - z_i - \ga B x} \in \ga A_i x 
	& \Leftrightarrow & (2x - z_i - \ga B x) - x \in \frac{\ga}{\om_i} A_i x \\
\intext{(by \lref{lem-fne}~\ref{resolv-fne})} & \Leftrightarrow & \hphantom{2}x = \JgAi(2x-z_i-\ga B x) \\
	& \Leftrightarrow & 2x - (2x - z_i) = 2 \JgAi(2x - z_i - \ga B x) \\
	& 		  & \hphantom{2x - (2x - z_i) = } - (2x - z_i - \ga B x) - \ga B x \\
  	& \Leftrightarrow & \hphantom{2}z_i = \RgAi(2x - z_i - \ga B x) - \ga B x ~.
\end{eqnarray*}
\end{proof}

Before formulating a fixed point equation, consider the following preparatory lemma.

\begin{lemma} For all $\bz = (z_i)_i \in \bHh$, $\bb = \pa{b}_i \in \bSs$, and $\bga = \pa{\ga_i}_i \in {]0,\pinfty[}^n$,
\begin{enumerate}[label={\rm (\roman{*})}, ref={\rm (\roman{*})}]
\item{\label{JNS}} $\JNS$ is the \textit{orthogonal projector} on $\bSs$, and $\JNS \bz = \bC\pa{\sum_i \om_i z_i}$;
\item{\label{RNS}} $\RNS \pa{ \bz - \bb } = \RNS \bz - \bb$;
\item{\label{RgA}} $\RgA \bz = \bpa{\RgiAi(z_i)}_i$.
\end{enumerate}
\label{JRSA}
\end{lemma}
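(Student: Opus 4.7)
Plan of proof. All three items are essentially unpackings of definitions once the projection-theoretic identity $J_{N_C} = \proj_C$ (valid for any nonempty closed convex $C$ in a Hilbert space) is in hand, so I would organise the argument as three short paragraphs tracking the three claims.

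For \ref{JNS}, I would first record that $N_{\bSs}$ is maximal monotone and that its resolvent is the orthogonal projector onto $\bSs$: indeed, $\bx = J_{N_{\bSs}}(\bz)$ is equivalent to $\bz - \bx \in N_{\bSs}(\bx)$, i.e.\ to $\bx \in \bSs$ together with $\bdprod{\bz - \bx}{\by - \bx} \le 0$ for every $\by \in \bSs$, which is exactly the variational characterization of $\projS \bz$ with respect to the weighted inner product $\bdprod{\cdot}{\cdot}$ of $\bHh$. Then I would compute this projection explicitly. Since $\bSs$ is the image of $\bC$, I seek $x^\ast \in \Hh$ minimizing
\begin{equation*}
\bnorm{\bz - \bC(x)}^2 \;=\; \sum_{i=1}^n \om_i \norm{z_i - x}^2.
\end{equation*}
Taking the derivative in $x$ and using $\sum_i \om_i = 1$, the unique minimizer is $x^\ast = \sum_i \om_i z_i$, giving $\JNS \bz = \bC(\sum_i \om_i z_i)$, as claimed.

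For \ref{RNS}, the key point is that $\bSs$ is a closed \emph{linear} subspace of $\bHh$, so by \ref{JNS} the operator $\projS$ is linear. Therefore $\projS(\bz - \bb) = \projS \bz - \projS \bb$, and since $\bb \in \bSs$ gives $\projS \bb = \bb$, I get $\RNS(\bz-\bb) = 2\projS(\bz-\bb) - (\bz-\bb) = 2\projS \bz - \bz - \bb = \RNS \bz - \bb$.

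For \ref{RgA}, I would invoke the definition of the resolvent as a graph-theoretic inverse, which does not depend on the inner product chosen on $\bHh$. By construction of $\bgaA$ as the Cartesian product of the $\ga_i A_i$, the identity $\bx = J_{\bgaA}\bz$ is equivalent to $\bz \in \bx + \bgaA \bx$, i.e.\ coordinate-wise to $z_i \in x_i + \ga_i A_i x_i$, i.e.\ $x_i = J_{\ga_i A_i}(z_i)$. Hence $J_{\bgaA}\bz = (J_{\ga_i A_i}(z_i))_i$, and substituting into $R_{\bgaA} = 2 J_{\bgaA} - \bId$ yields $\RgA \bz = (2J_{\ga_i A_i}(z_i) - z_i)_i = (\RgiAi(z_i))_i$.

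No step is a real obstacle; the only thing that needs a sentence of care is noting that $J_{N_{\bSs}} = \projS$ is computed with respect to the \emph{weighted} inner product on $\bHh$, so the weights $\om_i$ appear in the explicit formula in \ref{JNS}. Everything else follows by direct calculation.
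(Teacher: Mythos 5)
Your proof is correct and follows essentially the same route as the paper: identify $\JNS$ with the orthogonal projector (you via the variational inequality for the normal cone, the paper via $J_{\partial \iota_{\bSs}} = \prox_{\iota_{\bSs}}$), compute the barycenter for the weighted norm, use linearity of $\projS$ plus $\projS\bb = \bb$ for \ref{RNS}, and exploit the coordinate-wise separability of $\bgaA$ for \ref{RgA}. Your remark that the resolvent is inner-product-independent while the projection is not is a nice touch of care, but the argument is otherwise the paper's own.
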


\begin{proof}
{~}\\
\ref{JNS}.~From \lref{lem-subdiff} \ref{moreau}, we have for $\bz \in \bHh$, 
\begin{equation*}
\JNS(\bz) = {\argmin_{\by \in \bSs} \bnorm{\bz - \by}} \eqdef \proj_{\bSs}(\bz)~.
\end{equation*}
Now, ${\argmin_{\by \in \bSs} \bnorm{\bz - \by}^2} = \bC\pa{\argmin_{y \in \Hh} \sum_i \om_i \norm{z_i - y}^2}$, where the uni\-que minimizer of $\sum_i \om_i \norm{z_i - y}^2$ is the barycenter of $\pa{z_i}_i$, \ie $\sum_i \om_i z_i$.

\ref{RNS}.~$\JNS$ is obviously linear, and so is $\RNS$. Since $\bb \in \bSs$, $\RNS \bb = \bb$ and the result follows.

\ref{RgA}.~This is a consequence of the separability of $\bgaA$ in terms of the components of $\bz$ implying that $\JgA \bz = (\JgiAi z_i)_i$. The result follows from the definition of $\RgA$.
\end{proof}

\begin{proposition}
$(z_i)_{i \in \bbket{1,n}} \in \Hh^n$ satisfies \eqref{eq-zi} if and only if  $\bz = \pa{z_i}_i$ is a fixed point of the following operator
\begin{equation}\label{operator}
\begin{array}{rcl} \bHh & \longrightarrow & \bHh \\
	\bz & \longmapsto & \frac{1}{2} \big[ \RgA  \RNS + \bId \big]  \big[ \bId - \ga \bBJ \big](\bz) ~,
\end{array}
\end{equation}
with $\bga = \pa{\frac{\ga}{\om_i}}_i$.
\label{fix-z}
\end{proposition}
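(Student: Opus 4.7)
The plan is to carry out a direct componentwise computation: apply the composite operator in \eqref{operator} to an arbitrary $\bz = (z_i)_i \in \bHh$, simplify each stage using \lref{JRSA}, and then read off the fixed point condition to check that it matches \eqref{eq-zi} line-by-line.

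First I would set $x \eqdef \sum_i \om_i z_i$. By \lref{JRSA}\ref{JNS}, $\JNS \bz = \bC(x) = (x,\ldots,x)$, hence $\bB \JNS \bz = (Bx,\ldots,Bx)$, which lies in $\bSs$. Therefore
\[
\bpa{\bId - \ga \bB \JNS}\bz = \pa{z_i - \ga B x}_i,
\]
and since $(\ga Bx,\ldots,\ga Bx) \in \bSs$, item \ref{RNS} of \lref{JRSA} gives
\[
\RNS\bpa{\bId - \ga \bB \JNS}\bz = \RNS \bz - (\ga Bx)_i = \pa{2x - z_i - \ga B x}_i.
\]
Applying now $\RgA$ with $\bga = (\ga/\om_i)_i$ and using the separability in \lref{JRSA}\ref{RgA}, the $i$-th component of $\RgA \RNS \bpa{\bId - \ga \bB \JNS}\bz$ is $\RgAi\pa{2x - z_i - \ga B x}$. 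Adding $\bpa{\bId - \ga \bB \JNS}\bz$ and dividing by $2$, the $i$-th component of the operator in \eqref{operator} applied to $\bz$ reads
\[
\tfrac{1}{2}\bpa{\RgAi(2x - z_i - \ga Bx) + z_i - \ga Bx}.
\]

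It then only remains to equate this to $z_i$ for each $i$: the resulting equation $2 z_i = \RgAi(2x - z_i - \ga B x) + z_i - \ga B x$ simplifies precisely to the first line of \eqref{eq-zi}. The second line, $x = \sum_i \om_i z_i$, is built into the definition of $x$ via $\JNS$, so no extra condition needs to be imposed: conversely, any $(z_i)_i$ satisfying \eqref{eq-zi} satisfies $x = \sum_i \om_i z_i$ by hypothesis, and reversing the computation shows $\bz$ is a fixed point. Since every implication above is an equivalence, the claimed characterization follows.

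No real obstacle is expected here: the statement is essentially a bookkeeping identity, and the only things to be careful about are (a) exploiting that $\bB\JNS\bz$ lies in $\bSs$ so that \lref{JRSA}\ref{RNS} applies cleanly, and (b) tracking the weights $\om_i$ correctly through the identity $\bga = (\ga/\om_i)_i$ when passing from $\RgAi$ to the resolvents that appear in \eqref{eq-zi}.
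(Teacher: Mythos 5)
Your proposal is correct and is essentially the paper's own argument run in the opposite direction: the paper starts from \eqref{eq-zi} and vectorizes it using the same three items of \lref{JRSA} (identifying $\bC(x)=\JNS\bz$, using $\RNS-\ga\bBJ=\RNS[\bId-\ga\bBJ]$, and the separability of $\RgA$), whereas you expand the operator componentwise and read off the fixed-point condition. Since every step in both directions is an equivalence, the two write-ups are the same proof.
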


\begin{proof}
Using \lref{JRSA} in \eqref{eq-zi}, we have $\bC(x) = \JNS \bz$, $\bC(B x) = \bBJ(\bz)$ and $\RNS - \ga \bBJ = \RNS  [\bId - \ga \bBJ]$. Altogether, this yields,
\begin{eqnarray*}
 \bz \sobjs{satisfies} \eqref{eq-zi} & \Leftrightarrow & \hphantom{2}\bz = \RgA \RNS \big[ \bId - \ga \bBJ \big] \bz - \ga \bBJ \bz  \\
  & \Leftrightarrow & 2 \bz = \RgA \RNS \big[ \bId - \ga \bBJ \big] \bz + \big[ \bId - \ga \bBJ \big] \bz  \\
  & \Leftrightarrow & \hphantom{2}\bz = \frac{1}{2} \big[ \RgA \RNS + \bId \big] \big[ \bId - \ga \bBJ \big] \bz
\end{eqnarray*}
\end{proof}

\subsection{Algorithmic Scheme and Convergence}

The expression \eqref{operator} gives us the operator on which is based the generalized forward-backward. We first study the properties of this operator before establishing convergence and robustness results of our algorithm derived from the Krasnoselskij-Mann scheme associated to it.

\begin{proposition}
For all $\bga \in {]0,\pinfty[}^n$, define
\begin{equation}\label{Tb-def}
\Tb : \begin{array}{rcl} \bHh & \longrightarrow & \bHh \\
	\bz & \longmapsto & \frac{1}{2} \left[ \RgA  \RNS + \bId \right] \bz~.
\end{array}
\end{equation}
Then, $\Tb$ is firmly non-expansive, \ie $\Tb \in \Ademi$.
\label{Tb}
\end{proposition}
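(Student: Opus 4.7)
The plan is to invoke the characterization of firmly non-expansive operators in \lref{lem-fne}: namely, $\Tb \in \Ademi$ if and only if $\Tb = \frac{1}{2}(\bId + \bR)$ for some non-expansive $\bR$. Given the very form \eqref{Tb-def}, this reduces everything to showing that $\bR \eqdef \RgA \RNS$ is non-expansive on $\bHh$.

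First I would establish that each of the two factors $\RNS$ and $\RgA$ is individually non-expansive. For $\RNS$: since $\bSs$ is a non-empty closed convex subset of $\bHh$, the normal cone $N_{\bSs}$ is maximal monotone (as recalled just before \sref{proof}), hence by \lref{lem-fne}~\ref{resolv-fne} its resolvent $\JNS$ is firmly non-expansive, and then by \lref{lem-fne}~\ref{fne}$\Leftrightarrow$\ref{ref} the reflection $\RNS = 2\JNS - \bId$ is non-expansive. The same chain of reasoning applies to $\RgA$: the operator $\bgaA$ was shown to be maximal monotone on $\bHh$ at the end of \sref{subsec-defn-prop-other}, so $\JgA$ is firmly non-expansive, and therefore $\RgA = 2\JgA - \bId$ is non-expansive.

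Next I would compose: the composition of two $1$-Lipschitz (equivalently, non-expansive) operators is $1$-Lipschitz, so $\bR = \RgA \RNS$ is non-expansive on $\bHh$. Consequently, $\Tb = \frac{1}{2}(\bR + \bId)$ fits exactly the template of \lref{lem-fne}~\ref{ref}$\Rightarrow$\ref{fne}, which yields $\Tb \in \Ademi$.

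There is essentially no obstacle: the claim is a direct bookkeeping exercise given the tools already assembled (\lref{lem-fne}, the maximal monotonicity of $N_{\bSs}$ and $\bgaA$). The only minor subtlety is to make sure one works with the scalar product $\bdprod{\cdot}{\cdot}$ on $\bHh$ throughout — but since $\JNS$ and $\JgA$ were defined as resolvents with respect to this very inner product, the non-expansiveness assertions apply in the correct geometry, and no rescaling is needed.
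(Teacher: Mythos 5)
Your proof is correct and follows essentially the same route as the paper: show that $\RNS$ and $\RgA$ are non-expansive via \lref{lem-fne}, conclude that their composition is non-expansive, and read off $\Tb \in \Ademi$ from the definition of averaged operators. The only cosmetic difference is that you derive the non-expansiveness of $\RgA$ directly from the maximal monotonicity of $\bgaA$ on $\bHh$, whereas the paper goes through the componentwise identity $\RgA \bz = \bpa{\RgiAi(z_i)}_i$ of \lref{JRSA}~\ref{RgA}; both are valid and equally short.
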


\begin{proof}
From \lref{lem-fne}, $\RgiAi$ and $\RNS$ are non-expansive. In view of \lref{JRSA} \ref{RgA}, $\RgA$ is non-expansive as well. Finally, as a composition of non-expansive operators, $\RgA\RNS$ is also non-expansive, and the proof is complete by the definition of $\Ademi$.
\end{proof}

\begin{proposition}
For all $\ga \in ]0,2\be[$, define
\begin{equation}\label{Tf-def}
\Tf : \begin{array}{rcl} \bHh & \longrightarrow & \bHh \\
	\bz & \longmapsto & \left[ \bId - \ga \bBJ \right] \bz ~.\end{array}
\end{equation}
Then, $\Tf \in \Aa \pa{ \frac{\ga}{2 \be} }$.
\label{Tf}
\end{proposition}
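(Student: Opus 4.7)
The goal is to exhibit $\Tf$ in the form $(1-\alpha)\bId + \alpha \bR$ with $\alpha = \ga/(2\be)$ and $\bR$ non-expansive, which is exactly the definition of $\Aa(\alpha)$. The natural candidate is $\bR = \bId - 2\be\, \bBJ$, since a direct computation gives
\[
\bigl(1-\tfrac{\ga}{2\be}\bigr)\bId + \tfrac{\ga}{2\be}\bigl(\bId - 2\be\,\bBJ\bigr) = \bId - \ga\,\bBJ = \Tf,
\]
and for $\ga \in ]0,2\be[$ the coefficient $\ga/(2\be)$ indeed lies in $]0,1[$. Thus everything reduces to showing that $\bId - 2\be\,\bBJ$ is non-expansive on $\bHh$, or equivalently (by \lref{lem-fne}~\ref{fne}$\Leftrightarrow$\ref{ref}) that $\be\,\bBJ$ is firmly non-expansive on $\bHh$.

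\textbf{First step: reduce to a computation on $\Hh$.} Pick $\bz, \bz' \in \bHh$ and set $x = \sum_i \om_i z_i$, $x' = \sum_i \om_i z'_i$. By \lref{JRSA}~\ref{JNS}, $\JNS \bz = \bC(x)$ and $\JNS \bz' = \bC(x')$, so $\bBJ\bz = \bC(Bx)$ and $\bBJ\bz' = \bC(Bx')$. Using the definition of $\bdprod{\cdot}{\cdot}$ and $\bnorm{\cdot}$ together with $\sum_i \om_i = 1$:
\begin{align*}
\bnorm{\bBJ\bz - \bBJ\bz'}^2 &= \textstyle\sum_i \om_i \norm{Bx - Bx'}^2 = \norm{Bx - Bx'}^2, \\
\bdprod{\bBJ\bz - \bBJ\bz'}{\bz - \bz'} &= \textstyle\sum_i \om_i \dprod{Bx - Bx'}{z_i - z'_i} = \dprod{Bx - Bx'}{x - x'}.
\end{align*}

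\textbf{Second step: invoke firm non-expansiveness of $\be B$.} By hypothesis $\be B \in \Ademi$, so by \lref{lem-fne}~\ref{norm-fne} applied to $\be B$,
\[
\be^2 \norm{Bx - Bx'}^2 = \norm{\be Bx - \be Bx'}^2 \leq \dprod{\be Bx - \be Bx'}{x - x'} = \be\,\dprod{Bx - Bx'}{x - x'}.
\]
Dividing by $\be > 0$ and substituting into the two identities above gives
\[
\be\,\bnorm{\bBJ\bz - \bBJ\bz'}^2 \leq \bdprod{\bBJ\bz - \bBJ\bz'}{\bz - \bz'},
\]
which by \lref{lem-fne}~\ref{norm-fne} is exactly $\be\,\bBJ \in \Ademi$.

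\textbf{Conclusion.} By \lref{lem-fne}~\ref{fne}$\Leftrightarrow$\ref{ref} (applied with $A = \be\,\bBJ$), the operator $\bR = \bId - 2\be\,\bBJ = -(2(\be\,\bBJ) - \bId)$ is non-expansive, and the convex combination computed at the outset shows $\Tf \in \Aa(\ga/(2\be))$. The only slightly delicate point is ensuring the computation is done in the weighted space $(\bHh, \bdprod{\cdot}{\cdot})$: the weights $\om_i$ cancel nicely because $\bBJ$ is constant on each coordinate (it takes the same value $Bx$ in every block), so both the squared norm and the inner product collapse to their counterparts on $\Hh$. I expect no real obstacle beyond that bookkeeping.
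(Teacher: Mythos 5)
Your proof is correct. The core of both arguments is the same: establish that $\be\,\bBJ$ is firmly non-expansive and deduce that $\bId - \ga\,\bBJ$ is $\tfrac{\ga}{2\be}$-averaged. Where you differ is in the execution of both halves. For the firm non-expansiveness, the paper argues abstractly: it applies \lref{lem-fne}~\ref{norm-fne} to $\be\bB$ at the points $\JNS\bx$, $\JNS\by$, then moves $\JNS$ across the inner product using that it is a self-adjoint projector and that $\bB$ maps $\bSs$ into $\bSs$ (so $\JNS\bBJ = \bBJ$). You instead exploit the explicit formula $\bBJ\bz = \bC\bigl(B(\sum_i \om_i z_i)\bigr)$ to collapse the weighted norm and inner product on $\bHh$ down to $\Hh$, and then invoke firm non-expansiveness of $\be B$ there; the bookkeeping with the weights $\om_i$ is handled correctly. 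For the final step, the paper cites \cite[Lemma~2.3]{Combettes04} (co-coercivity of $\be C$ implies $\bId - \ga C \in \Aa(\ga/2\be)$ for $\ga \in \left]0,2\be\right[$), whereas you prove exactly that lemma inline by writing $\Tf = (1-\tfrac{\ga}{2\be})\bId + \tfrac{\ga}{2\be}(\bId - 2\be\,\bBJ)$ and checking via \lref{lem-fne}~\ref{fne}$\Leftrightarrow$\ref{ref} that the second term is non-expansive. The net effect is that your proof is entirely self-contained but tied to the specific structure of $\JNS$ as the projector onto the diagonal, while the paper's is shorter and works verbatim for any orthogonal projector in place of $\JNS$.
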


\begin{proof}
By hypothesis, $\be B \in \Ademi$ and so is $\be\bB$. Then, we have for any $\bx, \by \in \bHh$
\begin{eqnarray}
 \bnorm{\be \bBJ \bx - \be \bBJ \by}^2 & \leq & \bdprod{\be \bBJ \bx - \be \bBJ \by}{\JNS \bx - \JNS \by} \nonumber\\
  & & = \bdprod{\be \JNS \bBJ \bx - \be \JNS \bBJ \by}{\bx - \by} \nonumber\\
  & & = \bdprod{\be \bBJ \bx - \be \bBJ \by}{\bx - \by} ~,\label{monot-cont}
\end{eqnarray} 
where we derive the first equality from the fact that $\JNS$ is self-adjoint (\lref{JRSA} \ref{JNS}), and the second equality using that for all $\bx \in \bSs$, $\bB \bx \subset \bSs$ and $\JNS \bx = \bx$. From \lref{lem-fne} \ref{norm-fne}$\Leftrightarrow$\ref{fne}, we establish that $\be \bBJ \in \Ademi$. We conclude using \cite[Lemma 2.3]{Combettes04}.
\end{proof}

\begin{proposition}
For all $\bga \in {]0,\pinfty[}^n$ and $\ga \in {]0,2\beta[}$, $\Tb\Tf \in \Aa(\al)$, with $\al = \max\pa{\frac{2}{3},\frac{2}{1+2\be/{\ga}}}$.
\label{TbTf}
\end{proposition}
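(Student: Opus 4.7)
From Proposition~\ref{Tb} I have $\Tb \in \Ademi = \Aa\bpa{\tfrac{1}{2}}$, and from Proposition~\ref{Tf} I have $\Tf \in \Aa\bpa{\frac{\ga}{2\be}}$; the hypothesis $\ga \in ]0,2\be[$ guarantees $\frac{\ga}{2\be} \in ]0,1[$. The plan is to invoke a composition lemma for averaged operators. The most convenient form is the self-composition rule: whenever $T_1,T_2 \in \Aa(\al_*)$ share the same averaged constant $\al_*$, one has $T_1 T_2 \in \Aa\bpa{\frac{2\al_*}{1+\al_*}}$ --- this is the special case $\al_1 = \al_2 = \al_*$ of Combettes' composition formula $\al = (\al_1 + \al_2 - 2\al_1 \al_2)/(1 - \al_1 \al_2)$.

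To bring $\Tb$ and $\Tf$ into a common averaged class, I would first verify the elementary inclusion $\Aa(\al_1) \subset \Aa(\al_2)$ whenever $0 < \al_1 \leq \al_2 < 1$: writing $T = (1-\al_1)\Id + \al_1 R$ with $R$ non-expansive, one rearranges as $T = (1-\al_2)\Id + \al_2 R'$ with $R' = \frac{\al_2 - \al_1}{\al_2}\Id + \frac{\al_1}{\al_2}R$, a convex combination of non-expansive operators and hence itself non-expansive. Setting $\al_* \eqdef \max\bpa{\tfrac{1}{2},\frac{\ga}{2\be}} \in ]0,1[$, this inclusion yields $\Tb, \Tf \in \Aa(\al_*)$, whereupon the self-composition rule delivers $\Tb \Tf \in \Aa\bpa{\frac{2\al_*}{1+\al_*}}$.

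The remaining step is purely arithmetic: rewriting $\frac{2\al_*}{1+\al_*} = \frac{2}{1+1/\al_*}$ and observing $\frac{1}{\al_*} = \min\bpa{2,\frac{2\be}{\ga}}$, one gets
\[
\frac{2\al_*}{1+\al_*} = \max\bpa{\frac{2}{3},\frac{2}{1+2\be/\ga}} = \al ~,
\]
which is the announced constant.

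No serious obstacle is expected; the argument is essentially bookkeeping around a classical composition lemma. The only subtle point is the choice of composition lemma: invoking the general formula directly with $\al_1 = \tfrac{1}{2}$ and $\al_2 = \ga/(2\be)$ in fact produces the sharper constant $\frac{2\be}{4\be - \ga}$, which a short case split on $\ga \lessgtr \be$ shows is dominated by the claimed maximum, so the slightly weaker stated form still holds. The symmetric route via $\al_*$ is cleaner because the monotonicity of $\al_* \mapsto \frac{2\al_*}{1+\al_*}$ lets the outer $\max$ pass through, reproducing the announced constant directly.
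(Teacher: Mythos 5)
Your proof is correct and takes essentially the same route as the paper: the paper's proof is a one-line appeal to \cite[Lemma 2.2 (iii)]{Combettes04}, which is precisely the composition rule you re-derive, namely that operators in $\Aa(\al_1)$ and $\Aa(\al_2)$ compose into $\Aa\pa{\frac{2\al_*}{1+\al_*}}$ with $\al_* = \max(\al_1,\al_2)$. Your bookkeeping identifying $\frac{2\al_*}{1+\al_*}$ for $\al_* = \max\pa{\frac{1}{2},\frac{\ga}{2\be}}$ with $\max\pa{\frac{2}{3},\frac{2}{1+2\be/\ga}}$ is exactly the stated constant, and your side remark about the sharper constant $\frac{2\be}{4\be-\ga}$ is also accurate.
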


\begin{proof}
As $\Tb$ and $\Tf$ are $\alpha$-averaged operators by \pref{Tb} and \pref{Tf}, it follows from \cite[Lemma 2.2 (iii)]{Combettes04} that their composition is also $\alpha$-averaged with the given value of $\alpha$. 
\end{proof}

\noindent The following proposition defines a maximal monotone operator $\Apga$ which will be useful for caracterizing fixed points of $\Tb\Tf$ as monotone inclusions.

\begin{proposition}\label{Ap-fix-zer}
For all $\bga \in {]0,\pinfty[}^n$ there exists a maximal monotone operator $\Apga$ such that $\Tb = J_{\Apga}$. Moreover for all $\ga > 0$,
\begin{equation}\label{zerApB}
\Fix \Tb \Tf = \zer \pa{\Apga + \ga \bBJ }~.
\end{equation}
\end{proposition}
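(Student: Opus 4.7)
The plan is to handle the two assertions separately, both as short consequences of machinery already established in the excerpt.

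For the first assertion, I would invoke Proposition \ref{Tb}, which gives $\Tb \in \Ademi$, i.e.\ $\Tb$ is firmly non-expansive on $\bHh$. By Lemma \ref{lem-fne}, specifically the equivalence \ref{fne}$\Leftrightarrow$\ref{resolv-fne} applied in the Hilbert space $\bHh$, any firmly non-expansive operator with full domain is the resolvent of a (uniquely determined) maximal monotone operator on $\bHh$. Setting $\Apga \eqdef \inv{\Tb} - \bId$ thus yields a maximal monotone operator with $\Tb = J_{\Apga}$, as desired. Note that the first part makes no reference to $\ga$ or $\bB$, so $\Apga$ depends only on $\bga$.

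For the second assertion, I would just unwind the definitions of fixed point and resolvent. Fix $\ga > 0$ and $\bz \in \bHh$. Using $\Tf \bz = \bz - \ga \bBJ \bz$ (definition \eqref{Tf-def}) and $\Tb = J_{\Apga} = \inv{(\bId + \Apga)}$, we have the chain of equivalences
\begin{align*}
\bz \in \Fix \Tb\Tf
&\Longleftrightarrow \bz = J_{\Apga}\bpa{\bz - \ga \bBJ \bz} \\
&\Longleftrightarrow \bz - \ga\bBJ \bz \in \bz + \Apga \bz \\
&\Longleftrightarrow -\ga\bBJ \bz \in \Apga \bz \\
&\Longleftrightarrow 0 \in \bpa{\Apga + \ga \bBJ}\bz ~,
\end{align*}
which is exactly $\bz \in \zer\bpa{\Apga + \ga \bBJ}$, yielding \eqref{zerApB}.

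There is no real obstacle here: the first part is a direct appeal to Minty's parametrization of firmly non-expansive maps (Lemma \ref{lem-fne}\ref{resolv-fne}), and the second is a one-line calculation from the definition of the resolvent. The only point worth being careful about is that we are working in the product space $\bHh$ with the weighted inner product $\bdprod{\cdot}{\cdot}$, so one should note explicitly that Lemma \ref{lem-fne} applies verbatim in this Hilbert space and that $\Apga$ is maximal monotone with respect to this inner product structure. Also, single-valuedness and full domain of $\Tb$ (needed to apply the lemma) are automatic from $\Tb \in \Ademi$ and the fact that $\Tb$ is defined on all of $\bHh$ via \eqref{Tb-def}.
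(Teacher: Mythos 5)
Your proposal is correct and follows essentially the same route as the paper: the existence of $\Apga$ via \pref{Tb} combined with \lref{lem-fne}~\ref{resolv-fne}, and then the identical chain of equivalences unwinding the resolvent to get \eqref{zerApB}. The extra remarks about the weighted inner product on $\bHh$ and the full domain of $\Tb$ are sensible but not points the paper felt the need to make explicit.
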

\begin{proof}
The existence of $\Apga$ is ensured by \pref{Tb} and \lref{lem-fne}~\ref{resolv-fne}. Then for $\bz \in \bHh$,
\begin{eqnarray}
 \bz = \Tb \Tf \bz  & \Leftrightarrow & \bz = \inv{\pa{\bId + \Apga}} \bpa{\bId - \ga \bBJ} \bz \nonumber\\
                    & \Leftrightarrow &  \bz - \ga \bBJ\bz \in \bz + \Apga\bz \nonumber\\
                    & \Leftrightarrow &  0 \in \Apga\bz + \ga \bBJ\bz \nonumber
\end{eqnarray}
\end{proof}

\noindent Now, let us examine the properties of $\Apga$.

\begin{proposition}\label{Ap-incl}
For all $\bga \in \, {]0,\pinfty[}^{n}$ and $\bu,\by \in \bHh$
\begin{equation}\label{caracAp}
\bu \in \Apga \by \Leftrightarrow \pSs{\bu} - \pSp{\by} \in \bgaA \pa{ \pSs{\by} - \pSp{\bu} }~,
\end{equation}
where we denote for $\by \in \bHh$, $\pSs{\by} \eqdef \projS\pa{\by}$ and $\pSp{\by} \eqdef \projP\pa{\by}$.
\end{proposition}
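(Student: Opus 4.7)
The plan is to unwind the definition of the resolvent $\Tb = J_{\Apga}$ and then translate everything into the orthogonal decomposition $\bHh = \bSs \oplus \bSs^{\bot}$. Concretely, $\bu \in \Apga \by$ is equivalent to $\by + \bu \in \by + \Apga \by$, i.e.\ to $\by = \Tb (\by + \bu)$. So I will set $\bz \eqdef \by + \bu$ and rewrite the fixed-point-like equation $\by = \Tb \bz$ using the explicit form \eqref{Tb-def} of $\Tb$.

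For the rewriting, the key observation is that, by \lref{JRSA}~\ref{JNS}, $\JNS = \projS$, whence
\[
\RNS \bz \,=\, 2\projS \bz - \bz \,=\, \pSs{\bz} - \pSp{\bz}.
\]
Injecting this in $\by = \tfrac{1}{2}[\RgA \RNS + \bId]\bz$ gives $2\by - \bz = \RgA(\pSs{\bz} - \pSp{\bz})$, and unfolding $\RgA = 2 J_{\bgaA} - \bId$ yields after a short rearrangement
\[
J_{\bgaA}\bpa{\pSs{\bz} - \pSp{\bz}} \,=\, \by - \pSp{\bz}.
\]
At this point I substitute $\bz = \by + \bu$ and use linearity of $\projS$ and $\projP$: the argument of $J_{\bgaA}$ becomes $\pSs{\by} + \pSs{\bu} - \pSp{\by} - \pSp{\bu}$, while the right-hand side becomes $\pSs{\by} - \pSp{\bu}$.

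Finally, I invoke the definition of the resolvent $J_{\bgaA} = \inv{(\bId + \bgaA)}$: the previous equality is equivalent to
\[
\pSs{\by} + \pSs{\bu} - \pSp{\by} - \pSp{\bu} \,\in\, \pa{\pSs{\by} - \pSp{\bu}} + \bgaA\pa{\pSs{\by} - \pSp{\bu}},
\]
which simplifies to $\pSs{\bu} - \pSp{\by} \in \bgaA(\pSs{\by} - \pSp{\bu})$, as required. Each step is reversible, so the chain of equivalences proves \eqref{caracAp}.

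The proof is essentially bookkeeping: no deep monotone-operator machinery is needed beyond what is already established in \pref{Tb} (to get $\Apga$ via \lref{lem-fne}~\ref{resolv-fne}) and \lref{JRSA}~\ref{JNS} (to identify $\JNS$ with $\projS$). The only mildly delicate point I anticipate is keeping the orthogonal-projection algebra straight when substituting $\bz = \by + \bu$, in particular to make sure the cross terms collapse correctly using $\bz = \pSs{\bz} + \pSp{\bz}$; but once $\bz$ is decomposed along $\bSs$ and $\bSs^{\bot}$ the identification is immediate.
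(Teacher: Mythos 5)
Your proof is correct and follows essentially the same route as the paper's: both reduce $\bu \in \Apga \by$ to $\Tb(\by+\bu)=\by$ via $\Apga = \inv{\Tb}-\bId$, rewrite $\Tb$ using the projector identities of \lref{JRSA}, and conclude with the definition of the resolvent $J_{\bgaA}$. The only cosmetic difference is that the paper first records the closed form $\Tb = \JgA(\projS-\projP)+\projP$ before substituting, whereas you unfold $\RgA\RNS$ in-line; the algebra is identical.
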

\begin{proof}
First of all, by definition of $\Tb$ we have
\begin{eqnarray}
 \Tb & = & \half \left[ \pa{2\JgA - \bId} \pa{2\JNS - \bId} + \bId \right] \nonumber\\
     & = & \half \left[ 2\JgA (\projS - \projP) - (\projS - \projP) + \projS + \projP \right] \nonumber\\
     & = & \JgA (\projS - \projP) + \projP~. \label{caracTb}
\end{eqnarray} 
By definition we have $\Apga = \inv{\Tb} - \bId$ so that
\begin{eqnarray}
\bu \in \Apga \by & \Leftrightarrow & \bu + \by \in \inv{\Tb} \by \nonumber\\
                  & \Leftrightarrow & \Tb \pa{\bu + \by} = \by \nonumber\\
\intext{(by \eqref{caracTb})} & \Leftrightarrow & \JgA \pa{ \pSs{(\bu+\by)} - \pSp{(\bu+\by)} } = \by - \pSp{(\bu + \by)} \nonumber\\
                  & \Leftrightarrow & \pSs{(\bu+\by)} - \pSp{(\bu+\by)} \in \pSs{\by} - \pSp{\bu} + \bgaA \pa{ \pSs{\by} - \pSp{\bu} } \nonumber\\
                  & \Leftrightarrow & \pSs{\bu} - \pSp{\by} \in \bgaA \pa{ \pSs{\by} - \pSp{\bu} }~. \nonumber
\end{eqnarray}
\end{proof}

We are now ready to state our main result, establishing convergence and robustness of the generalized forward-backward algorithm to solve \eqref{inclusion}.

\begin{theorem}
Let \\
$\pa{\ga_t}_{t \in \N}$ be a sequence in $]0,2\be[$, \\
$\pa{\bga_t}_{t \in \N}$ be a sequence in ${]0,\pinfty[}^n$ such that $\foralls t, \, \bga_t = \pa{\frac{\ga_t}{\om_i}}_i$, \\
$\pa{\la_t}_{t \in \N}$ be a sequence such that $\foralls t, \, \la_t \in I_\la$ (made explicit below),\\
set $\bz_0 \in \bHh$, and for every $t \in \N$, set
\begin{equation}\label{gfb-it}
\bz_{t+1} = \bz_t + \la_t \bpa{\Tbt \bpa{ \Tft \bz_t + \bEps_{2,t} } + \bEps_{1,t} - \bz_t}
\end{equation}
where $\Tbt$ (resp. $\Tft$) is defined in \eqref{Tb-def} (resp. in \eqref{Tf-def}),
and $\bEps_{1,t}, \bEps_{2,t} \in \bHh$.
Set $\varlimsup \ga_t = \bar{\ga}$ and define the following conditions:
\begin{enumerate}[label={\rm (A\arabic{*})}, ref={\rm (A\arabic{*})}]
\setcounter{enumi}{-1}
\item{\label{fix-la-err}}
	\begin{enumerate}[label={\rm (\roman{*})}, ref={\rm (\roman{*})}]
		\item{\label{sol-non-empty}} $\zer\bpa{B+\sum_i A_i} \neq \emptyset$;
		\item{\label{lim-la}} $0 < \varliminf \la_t \leq \varlimsup \la_t < \min\pa{\frac{3}{2},\frac{1+2\be/\bar{\ga}}{2}}$;
		\item{\label{sum-err}} $\sum_{t=0}^{\pinfty} \bnorm{\bEps_{1,t}} < \pinfty$ and $\sum_{t=0}^{\pinfty} \bnorm{\bEps_{2,t}} < \pinfty$.
	\end{enumerate} 
\item{\label{big-la}}
	\begin{enumerate}[label={\rm (\roman{*})}, ref={\rm (\roman{*})}]
		\item{\label{fix-ga}} $\foralls t, \, \ga_t = \bar{\ga} \in ]0,2\be[$;
		\item{\label{bound-la}} $I_\la = \left]0,\min\pa{\frac{3}{2},\frac{1+2\be/\bar{\ga}}{2}}\right[$.
	\end{enumerate} 
\item{\label{var-ga}}
	\begin{enumerate}[label={\rm (\roman{*})}, ref={\rm (\roman{*})}]
		\item{\label{lim-ga}} $0 < \varliminf \ga_t \leq \bar{\ga} < 2\be$;
		\item{\label{small-la}} $I_\la = ]0,1]$.
	\end{enumerate}
\end{enumerate} 
Suppose that \ref{fix-la-err} is satisfied. Then, If either \ref{big-la} or \ref{var-ga} is satisfied, 
\begin{enumerate}[label={\rm (\roman{*})}, ref={\rm (\roman{*})}]
\item{\label{claimTz}} $\bpa{\Tbt \Tft \bz_t - \bz_t}_{t \in \N}$ converges strongly to $0$.
\item{\label{claimz}} $(\bz_t)_{t \in \N}$ converges weakly to a point $\bz \in \Fx \eqdef \bigcap_{t \in \N} \Fix \Tbt \Tft$.
\item{\label{claimx}} $\bpa{x_t \eqdef \sum_i \om_i\bz_{i,t}}_{t \in \N}$ converges weakly to $x \eqdef \sum_i \om_i \bz_i \in \zer{\bpa{B+\sum_i A_i}}$.

Moreover, if \ref{var-ga-algo} is satisfied and $B$ is uniformly monotone, then
\item{\label{claimstrong}} $\pa{x_t}_{t \in \N}$ converges strongly.
\end{enumerate}
\label{gfb-thm}
\end{theorem}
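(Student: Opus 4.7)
The plan is to view~\eqref{gfb-it} as an inexact relaxed Krasnoselskij--Mann iteration for the composite operator $\Tbt\Tft$, which by Propositions~\ref{Tb},~\ref{Tf} and~\ref{TbTf} is $\al_t$-averaged with $\al_t = \max\bpa{2/3,\,2/(1+2\be/\ga_t)}$. The admissible range of $\la_t$ prescribed in~\ref{bound-la} and~\ref{small-la} is exactly $(0,1/\al_t)$, the classical feasibility interval for such schemes. Since $\Tbt$ is firmly non-expansive, hence $1$-Lipschitz, the inner perturbation is absorbed through $\bnorm{\Tbt(\Tft\bz_t+\bEps_{2,t})-\Tbt\Tft\bz_t}\leq\bnorm{\bEps_{2,t}}$, so the net per-step additive error is bounded by $\la_t\bpa{\bnorm{\bEps_{1,t}}+\bnorm{\bEps_{2,t}}}$ and is summable by~\ref{sum-err}.

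My first task is to exhibit elements of $\Fx$. Given $x^*\in\zer(B+\sum_iA_i)$ from~\ref{sol-non-empty} and a Kuhn--Tucker selection $(u_i^*)_i\in\prod_iA_ix^*$ with $\sum_iu_i^*=-Bx^*$, the ansatz $z_i^*(\ga)=x^*-\tfrac{\ga}{\om_i}u_i^*-\ga Bx^*$ together with Propositions~\ref{fix-zi} and~\ref{fix-z} yields a fixed point of $\Tbt\Tft$ for each admissible $\ga_t$. Under~\ref{fix-ga} this $\bz^*$ is independent of $t$ and thus lies in $\Fx$; under~\ref{var-ga} the dependence on $\ga_t$ will be reconciled at the limit via Proposition~\ref{Ap-fix-zer}.

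The core analytic step is a quasi-Fej\'er inequality. Writing $\Tbt\Tft=(1-\al_t)\bId+\al_t\bR_t$ with $\bR_t$ non-expansive, the standard averaged-operator identity gives, for $\bz^*\in\Fx$,
\[
\bnorm{(1-\la_t)\bz_t+\la_t\Tbt\Tft\bz_t-\bz^*}^2\leq\bnorm{\bz_t-\bz^*}^2-\tfrac{\la_t(1-\la_t\al_t)}{\al_t}\bnorm{\Tbt\Tft\bz_t-\bz_t}^2.
\]
Under~\ref{big-la}, $\al_t$ is constant and $\la_t$ is bounded away from $0$ and $1/\al$ by~\ref{lim-la}; under~\ref{var-ga}, $\al_t\leq\max(2/3,2/(1+2\be/\bar\ga))<1$ by~\ref{lim-ga} while $\la_t\in(0,1]$, so $\la_t\al_t$ stays in a compact subset of $(0,1)$. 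Either way, $\la_t(1-\la_t\al_t)/\al_t\geq c>0$ eventually. Expanding the perturbed update and controlling the cross terms by Cauchy--Schwarz and a standard bootstrap on $\bnorm{\bz_t-\bz^*}$, I reach
\[
\bnorm{\bz_{t+1}-\bz^*}^2\leq\bnorm{\bz_t-\bz^*}^2-c\,\bnorm{\Tbt\Tft\bz_t-\bz_t}^2+\eta_t,\qquad\textstyle\sum_t\eta_t<\infty,
\]
whence boundedness of $(\bz_t)$ and, by summation, claim~\ref{claimTz}.

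Claim~\ref{claimz} follows from Opial's lemma applied to the quasi-Fej\'er sequence: it suffices to check that every weak cluster point lies in $\Fx$. Under~\ref{big-la}, $\Tbb\Tfb$ is a single averaged operator and $\bId-\Tbb\Tfb$ is demiclosed at $0$, so claim~\ref{claimTz} closes the argument. The main obstacle is case~\ref{var-ga}: with the operator varying in $t$, I would rewrite the fixed-point condition via Proposition~\ref{Ap-fix-zer} as $0\in\Apgat\bz_t+\ga_t\bBJ\bz_t$, use the explicit characterization of Proposition~\ref{Ap-incl} to decouple the $\bSs$- and $\bSs^\bot$-components, and pass to the limit by maximal monotonicity of $\bgaA$ and $\bBJ$. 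Claims~\ref{claimx} and~\ref{claimstrong} then follow: the averaging map $\bz\mapsto\sum_i\om_iz_i$ is weakly continuous, so $(x_t)$ converges weakly to $x=\sum_i\om_i\bz_i$, and reversing Proposition~\ref{fix-zi} places $x\in\zer(B+\sum_iA_i)$; under uniform monotonicity of $B$, an extra modulus-of-convexity term on the $\bSs$-component strengthens the Fej\'er inequality and forces strong convergence of $(x_t)$.
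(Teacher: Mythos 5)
Your overall architecture matches the paper's: recast \eqref{gfb-it} as an inexact Krasnoselskij--Mann iteration for the $\al_t$-averaged operators $\Tbt\Tft$, absorb the inner error through non-expansiveness of $\Tbt$, derive quasi-Fej\'er monotonicity together with summability of $\bnorm{\Tbt\Tft\bz_t-\bz_t}^2$ (the paper outsources this to \cite[Theorem~3.1 and Remark~3.4]{Combettes04}; you rederive the averaged-operator inequality by hand, correctly), and settle the stationary case \ref{big-la} by demiclosedness of $\bId-\Tbb\Tfb$ at $0$. Up to that point the proposal is sound and essentially identical to the paper.

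The genuine gap is in the cluster-point identification under \ref{var-ga} --- exactly the step you flag as ``the main obstacle'' and then dispatch with ``pass to the limit by maximal monotonicity of $\bgaA$ and $\bBJ$''. Two problems. First, the inclusion you propose to pass to the limit in, $0\in\Apgat\bz_t+\ga_t\bBJ\bz_t$, is false: $\bz_t$ is not a fixed point of $\Tbt\Tft$; the correct exact inclusion is $\bu_t\in\Apgat\by_t$ with $\by_t=\Tbt\Tft\bz_t$ and $\bu_t=\Tft\bz_t-\by_t$. Second, and more seriously, maximal monotonicity only permits taking limits along graph sequences whose point-component converges weakly and whose image-component converges \emph{strongly} (sequential weak--strong closedness). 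Along your subsequence, $\by_{t_\tau}$ converges only weakly, so you must prove strong convergence of $\bu_{t_\tau}$. This is where the paper invokes an extra output of \cite[Theorem~3.1]{Combettes04}, namely $(\bId-\Tft)\bz_t-(\bId-\Tft)\bz\to 0$ strongly for $\bz\in\Fx$, i.e. $\bBJ\bz_{t_\tau}\to\bBJ\bz$ strongly; this comes from the cocoercivity of $\bBJ$ combined with the firm non-expansiveness of $\Tbt$, not from generic averagedness, and nothing in your quasi-Fej\'er inequality produces it. Without that ingredient the limit passage collapses. A related soft spot, which your explicit ansatz $z_i^*(\ga)=x^*-\ga Bx^*-\frac{\ga}{\om_i}u_i^*$ actually makes visible: these candidate fixed points depend on $\ga$, so non-emptiness of $\Fx=\bigcap_t\Fix\Tbt\Tft$ under varying $\ga_t$ must be secured \emph{before} the Fej\'er argument can run against a common $\bz^*$; it cannot be ``reconciled at the limit''. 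Finally, your mechanism for \ref{claimstrong} (``an extra modulus term in the Fej\'er inequality'') is not what the paper does and would again need the missing strong convergence: the paper bounds $\phy(\norm{x_t-x})$ by $\bdprod{\bBJ\bz_t-\bBJ\bz}{\bz_t-\bz}$, which tends to $0$ as the pairing of a strongly convergent with a bounded weakly convergent sequence.
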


\begin{proof} For sequences in a Hilbert space, strong convergence is denoted by $\cv$ and weak convergence is denoted by $\cvwk$.

\ref{claimTz}-\ref{claimz}.
Suppose first that \ref{fix-la-err} and \ref{big-la} are satisfied.\\
Under \ref{big-la}-\ref{fix-ga}, $\bT \eqdef \Tbb \Tfb$ does not depend on $t$ (stationary operator). For all $t \in \N$, we have
\begin{equation}\label{T-Eps}
  \bz_{t+1} = \bz_t + \la_t \bpa{ \bT \bz_t + \bEps_{t} - \bz_t } ~,
\end{equation}
with $\bEps_{t} \eqdef \Tbb \bpa{ \Tfb \bz_t + \bEps_{2,t} } - \Tbb \bpa{ \Tfb \bz_t } + \bEps_{1,t}$. \pref{Tb} shows that $\Tbb \in \Ademi$ is in particular non-expansive, so that $\bnorm{\bEps_t} \leq \bnorm{\bEps_{2,t}} + \bnorm{\bEps_{1,t}}$, and we deduce from \ref{fix-la-err}-\ref{sum-err} that $\sum_{t=0}^{\pinfty} \bnorm{\bEps_{t}} < \pinfty$. Moreover, by \pref{TbTf} and \ref{big-la}-\ref{fix-ga}, $\bT \in \Aa(\al)$ with $\al = \max\pa{\frac{2}{3},\frac{2}{1+2\be/\bar{\ga}}}$. In particular, $\bT$ is non-expansive and thus $\Fx = \Fix \bT$ is closed and convex. Now, for $t \in \N$, set $\bT_t \eqdef \bId + \la_t \pa{\bT - \Id_{\bHh}}$, the iterations \eqref{T-Eps} can be rewritten
\begin{equation}\label{Tt-Eps}
  \bz_{t+1} = \bT_t \bz_t + \la_t \bEps_{t} ~.
\end{equation}
Since for all $t$, $\al_t \eqdef \lambda_t \al < 1$ by \ref{big-la}-\ref{bound-la}, \cite[Lemma 2.2 (i)]{Combettes04} shows that $\bT_t \in \Aa(\al_t)$, and \eqref{Tt-Eps} is thus a particular instance of \cite[Algorithm 4.1]{Combettes04}. Also, it is clear that for all $t$, $\Fix \bT_t = \Fix \bT$. By \pref{fix-zi} and \pref{fix-z}, \ref{fix-la-err}-\ref{sol-non-empty} provides $\Fx = \bigcap_{t \in \N} \Fix \bT_t \neq \emptyset$. According to \ref{fix-la-err}-\ref{lim-la}, $\varliminf \la_t > 0$ and $\varlimsup \al_t < 1$, so we deduce from \cite[Theorem~3.1 and Remark~3.4]{Combettes04} that
\begin{equation}\label{assymp-reg}
\sum_{t \in \N} \bnormb{\bT_t \bz_t - \bz_t}^2 < \pinfty.
\end{equation}
and that $\pa{\bz_t}_{t}$ is \textit{quasi-Fej\'er monotone} with respect to $\Fx$. By definition of $\bT_t$, \eqref{assymp-reg} gives ${\sum_{t \in \N} {\la_t}^2 \bnormb{\bT \bz_t - \bz_t}^2 < \pinfty}$, which in turn implies $\bT \bz_t - \bz_t \cv 0$ since $\varliminf \la_t > 0$. Then $\bT$ being non-expansive, it follows from the demiclosed principle \cite{Browder67}\cite[Corollary~4.18]{BauschkeCombettes11} that any weak cluster point of $\pa{\bz_t}_t$ belongs to $\Fix \bT$, so that \cite[Theorem~5.5]{BauschkeCombettes11} provides weak convergence towards $\bz \in \Fx$.\\

Suppose now that \ref{fix-la-err} and \ref{var-ga} are satisfied.\\
Again with \pref{fix-zi}, \pref{fix-z} and \ref{fix-la-err}-\ref{sol-non-empty}, $\Fx \neq \emptyset$. From \pref{Tb}, \pref{Tf} and \ref{var-ga}-\ref{lim-ga}, $\Tbt \in \Ademi$ and ${\Tft \in \Aa \pa{ \frac{\ga_t}{2 \be} }}$ for all $t$. So, under assumptions \ref{fix-la-err}-\ref{sum-err} and \ref{var-ga}, \cite[Theorem~3.1 and Remark~3.4]{Combettes04} provides that $\Tbt \Tft \bz_t - \bz_t \cv 0$ (establishing \ref{claimTz}), that for any $\bz \in \Fx$
\begin{equation}\label{Id_Tfz}
 (\bId - \Tft) \bz_t - (\bId - \Tft) \bz \converge{t}{\pinfty} 0~,
\end{equation} 
and that $\pa{\bz_t}_t$ is \textit{quasi-Fej\'er monotone} with respect to $\Fx$. Again, by non-expansivity $\Fx$ is closed and convex, and with \cite[Theorem~5.5]{BauschkeCombettes11}, $\pa{\bz_t}_t$ converges weakly to some point in $\Fx$ if, and only if, all of its weak cluster points lie in $\Fx$.\\
Let thus $\by$ be a weak cluster point of $\pa{\bz_t}_t$. $\pa{\ga_t}_t$ being bounded, we can extract a subsequence $\pa{\bz_{t_\tau}}_\tau$ converging weakly towards $\by$ such that $\pa{\ga_{t_\tau}}_\tau$ converges strongly to some $\gainf$ (${0 < \gainf < 2\be}$ by \ref{var-ga}-\ref{lim-ga}). Fix then $\bz \in \Fx$ and observe that \eqref{Id_Tfz} implies $\bBJ \bz_{t_\tau} \cv \bBJ \bz$.\\
Since $\be \bBJ \in \Ademi$, $\bBJ$ is continuous and monotone, hence maximal monotone \cite[Corollary~20.25]{BauschkeCombettes11}. Consequently, its graph is \textit{sequentially weakly-strongly closed} \cite[Corollary~20.33(ii)]{BauschkeCombettes11}. Because $\bBJ$ is single-valued and $\bz_{t_\tau} \cvwk \by$, we deduce $\bBJ \by = \bBJ \bz$.\\
Now denote for all $t$, $\by_t \eqdef \Tbt \Tft \bz_t$ and $\bu_t \eqdef \pa{\bId - \Tbt} \Tft \bz_t$. It follows from \ref{claimTz} that $\by_t - \bz_t \cv 0$, implying $\by_{t_\tau} \cvwk \by$. Then, $\bu_t = \Tft \bz_t - \Tbt \Tft \bz_t = \bz_t - \ga_t \bBJ \bz_t - \by_t$, so that $\bu_{t_\tau} \longrightarrow - \gainf \bBJ \by$.\\
Moreover, $\bu_t \in \pa{\pa{\bId + \Apgat} - \bId} \Tbt \Tft \bz_t$, hence $\bu_t \in \Apgat \by_t$. Thus for all $t$, $\pSs{\bu_{t}} - \pSp{\by_{t}} \in \bgatA \pa{ \pSs{\by_{t}} - \pSp{\bu_{t}} }$ by \pref{Ap-incl}. If $\pa{\bv, \bu} \in \gra \pa{\bgainfA}$ with ${\bgainf \eqdef \pa{\frac{\gainf}{\om_i}}_i}$, then $\frac{\ga_{t}}{\gainf} \bu \in \bgatA \bv$, and by monotonicity 
\begin{align*}
 \bdprod{\pSs{\bu_{t}} - \pSp{\by_{t}} - \textfrac{\ga_{t}}{\gainf} \bu}{\pSs{\by_{t}} - \pSp{\bu_{t}} - \bv} & \geq 0~, \hspace{3cm} \\
\intertext{by bilinearity and taking into account orthogonality}
 \bdprod{\pSs{\bu_{t}} - \textfrac{\ga_{t}}{\gainf} \bu}{\pSs{\by_{t}} - \pSp{\bu_{t}} - \bv} + \bdprod{\pSp{\by_{t}}}{\pSp{\bu_{t}} + \bv} & \geq 0~. \\
\intertext{By weak convergence, $\pa{\by_{t_\tau}}_\tau$ is bounded. Together with strong convergence of $\pa{\bu_{t_\tau}}_\tau$ and $\pa{\ga_{t_\tau}}_\tau$, \cite[Lemma~2.36]{BauschkeCombettes11} allows to take the limit as $\tau$ tends to infinity in the above inequality. Using $\bBJ \by \in \bSs$,}
 \bdprod{- \gainf \bBJ \by - \bu}{\pSs{\by} - \bv} + \bdprod{\pSp{\by}}{\bv} & \geq 0 \\
 \bdprod{- \gainf \bBJ \by - \pSp{\by} - \bu}{\pSs{\by} - \bv} & \geq 0~.
\end{align*} 
Hence maximality of $\bgainfA$ forces $\pa{ \pSs{\by}, -\gainf \bBJ \by - \pSp{\by} } \in \gra \pa{ \bgainfA }$, \linebreak \ie ${- \gainf \bBJ \by - \pSp{\by} \in \bgainfA \pa{\pSs{\by}}}$. Thus \pref{Ap-incl} provides $- \gainf \bBJ \by \in \Apgainf \by$, and by \pref{Ap-fix-zer}, $\by \in \Fix \Tbinf \Tfinf = \Fx$.\\

\noindent \ref{claimx}.~In both cases, for any $y \in \Hh$, $\dprod{y}{x_t-x} = \dprod{y}{\sum_i \om_i(\bz_{i,t}-\bz_i)} = \sum_i \om_i \dprod{y}{\bz_{i,t} - \bz_i} = \bdprod{\bC(y)}{\bz_t - \bz} \cv 0$ since $\bz_t \cvwk \bz$, hence weak convergence of $(x_t)_{t \in \N}$ towards $x$, which is a zero of $B + \sum_i A_i$ by \pref{fix-zi}.\\

\noindent \ref{claimstrong}.~If $B$ is uniformly monotone, then there exists a non-decreasing function $\phy: [0,\pinfty[ \to [0,\pinfty]$ that vanishes only at 0, such that for all
$x,y \in \Hh$
\begin{equation*}
\dprod{B x - B y}{x - y} \geq \phy(\norm{x - y}) ~.
\end{equation*}
For all $t \in \N$,
\begin{eqnarray*}
\bdprod{\bBJ \bz_t - \bBJ \bz}{\bz_t - \bz} 
	& = & \textsum{i}{} \om_i \dprod{B \pa{ \textsum{i}{} \om_i z_{i,t} } - B \pa{ \textsum{i}{} \om_i z_i}}{z_{i,t} - z_i} \\
	& = & \dprod{B \pa{ \textsum{i}{} \om_i z_{i,t} } - B \pa{ \textsum{i}{} \om_i z_i}}{\textsum{i}{} \om_i (z_{i,t} - z_i)} \\
	& \geq & \phy\pa{\normB{ \textsum{i}{} \om_i (z_{i,t} - z_i)}} = \phy \pa{ \norm{x_t - x} }~.
\end{eqnarray*}
Recall that under \ref{fix-la-err} and \ref{var-ga}, $\bBJ \bz_t \cv \bBJ \bz$ and $\bz_t \cvwk \bz$, so that \linebreak $\bdprod{\bBJ \bz_t - \bBJ \bz}{\bz_t - \bz} \longrightarrow 0$. In view of the properties of $\phy$, we obtain strong convergence of $\pa{x_t}_t$ towards $x$.
\end{proof}

\begin{remark}
\label{rem:fixedga}
In statements \ref{claimTz}-\ref{claimx} of \tref{gfb-thm} under \ref{fix-la-err}-\ref{big-la} (stationary case), assumptions \ref{fix-la-err} can be weakened. More precisely, \ref{fix-la-err}-\ref{lim-la} can be replaced by $\sum_{t \in \N} \la_t(1-\al\la_t) = \pinfty$ where $\alpha=\max(2/3,2/(1+2\beta/\bar{\ga}))$, and \ref{fix-la-err}-\ref{sum-err} by $\sum_{t\in\N} \lambda_t (\bnorm{\bEps_{1,t}} + \bnorm{\bEps_{2,t}}) < \pinfty$. The proof would follow the same lines as \cite[Lemma~5.1]{Combettes04}. Let's note also that a part of assumption \ref{fix-la-err}-\ref{lim-la} on $\varlimsup \la_t$ is not needed under \ref{var-ga}.
\end{remark}

\begin{remark}[Strong Convergence]
\label{rem:strong}
Assumption of uniform monotonicity in the proof of statement \ref{claimstrong} can be relaxed. For instance, the sequence $\pa{\bz_t}_{t \in \N}$ is indeed quasi-Fej\'er monotone with respect to $\Fx$. Thus, if $\inte \Fx \neq \emptyset$, strong convergence occurs by \cite[Lemma~2.8(iv)]{Combettes04}.
\end{remark} 

\begin{corollary} \tref{gfb-algo-thm} holds. \end{corollary}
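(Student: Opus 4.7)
The plan is to derive \tref{gfb-algo-thm} from the more general \tref{gfb-thm} by specializing to $B = \nabla F$ and $A_i = \partial G_i$ for each $i \in \bbket{1,n}$. By \lref{lem-subdiff}~\ref{baillon-haddad}, $\be \nabla F \in \Ademi$ and, by \ref{subdiff-mm}, each $\partial G_i$ is maximal monotone, so the structural hypotheses on $B$ and the $A_i$'s in \tref{gfb-thm} hold. Assumption \ref{sol-non-empty} follows from \ref{H:argmin} together with the identity $\argmin(\Psi) = \zer\pa{\nabla F + \sum_i \partial G_i}$ quoted just after \ref{H:sri}, which itself relies on the subdifferential sum rule enabled by the domain qualification \ref{H:sri}. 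The parameter conditions \ref{big-la}/\ref{var-ga} coincide verbatim with \ref{big-la-algo}/\ref{var-ga-algo}, so no further checking is needed on that side.

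The next step, and the bulk of the work, is to verify that \eqref{seq-w-err} is nothing but the componentwise form of \eqref{gfb-it} in $\bHh$, once the errors are translated. Using the resolvent/proximity identification $J_{(\ga_t/\om_i)\partial G_i} = \prox_{(\ga_t/\om_i) G_i}$ (\lref{lem-subdiff}~\ref{moreau}), the formula $(\JNS \bz_t)_i = \sum_j \om_j z_{j,t} = x_t$ (\lref{JRSA}~\ref{JNS}), and the trivial identity $\bB \bC = \bC B$ that packages $\nabla F(x_t)$ in every coordinate, a direct expansion of the $i$-th component of $\Tbt(\Tft \bz_t + \bEps_{2,t}) - \bz_t$ shows that setting $\bEps_{1,t} \eqdef (\Eps_{1,t,i})_i$ and $\bEps_{2,t} \eqdef -\ga_t \bC(\Eps_{2,t})$ makes \eqref{gfb-it} and \eqref{seq-w-err} coincide. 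Since $\bnorm{\bEps_{2,t}} = \ga_t \norm{\Eps_{2,t}}$, $\bnorm{\bEps_{1,t}} \leq \sum_i \norm{\Eps_{1,t,i}}$, and $\ga_t$ is bounded under either \ref{big-la-algo} or \ref{var-ga-algo}, the summability assumption \ref{sum-err-algo} implies \ref{sum-err}.

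With every hypothesis of \tref{gfb-thm} now in place, part \ref{claimx} delivers weak convergence of $x_t = \sum_i \om_i z_{i,t}$ to a point $x \in \zer\pa{\nabla F + \sum_i \partial G_i} = \argmin \Psi$. For the strong convergence statement, the only remaining point is that uniform convexity of $F$ implies uniform monotonicity of $\nabla F$: this is a classical consequence of the uniform convexity inequality combined with the subgradient characterization, applied at $(x,y)$ and $(y,x)$ and summed. Then \ref{claimstrong} applies and yields strong convergence to the unique global minimizer, uniqueness being guaranteed by the strict convexity of $F$, hence of $\Psi$. The only place where real care is needed is the componentwise identification of the two iterations together with the matching of the error terms; everything else amounts to invoking the right piece of \tref{gfb-thm}.
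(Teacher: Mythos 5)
Your proposal is correct and follows essentially the same route as the paper's proof: identify $B$ with $\nabla F$ and $A_i$ with $\partial G_i$, match \eqref{seq-w-err} to \eqref{gfb-it} via $\bEps_{1,t} = \pa{\Eps_{1,t,i}}_i$ and $\bEps_{2,t} = \bC\pa{-\ga_t \Eps_{2,t}}$, obtain \ref{sol-non-empty} from \ref{H:argmin}--\ref{H:sri}, and invoke \tref{gfb-thm} for weak convergence and (via uniform convexity implying uniform monotonicity of the subdifferential) for strong convergence. You merely make explicit some of the componentwise calculations and error-norm estimates that the paper declares it is ``skipping,'' which is a harmless elaboration.
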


\begin{proof}
Let $\bpa{z_{i,t}}_{t \in \N}$ and $\bpa{x_t}_{t \in \N}$ be the sequences defined in \eqref{seq-w-err}. Identifying $B$ with $\nabla F$ and $A_i$ with $\partial G_i$ and skipping some calculations, $\pa{ \pa{ z_{i,t} }_i }_{t \in \N}$ follows iterations \eqref{gfb-it} with $\bEps_{1,t} = \pa{\Eps_{1,t,i}}_i$ and $\bEps_{2,t} = \bC \pa{ - \ga_t \Eps_{2,t} }$, providing \ref{fix-la-err}-\ref{lim-la}-\ref{sum-err} in \tref{gfb-thm}. Now, under \ref{H:argmin}-\ref{H:sri}, $\argmin(F + \sum_i G_i) = \zer(\nabla F + \sum_i \partial G_i) \neq \emptyset$, providing \ref{fix-la-err}-\ref{sol-non-empty} in \tref{gfb-thm}. The proof of weak convergence of $\bpa{x_t}_{t \in \N}$ follows from \tref{gfb-thm}-\ref{claimx}. The proof of strong convergence is a consequence of \tref{gfb-thm}-\ref{claimstrong} together with the fact that uniform convexity of a function in $\Ga_0(\Hh)$ implies uniform monotonicity of its subdifferential \cite{BauschkeCombettes11}.
\end{proof}

\section{Discussion\label{discussion}}
\subsection{Special instances}
\label{subsec:spec}
The generalized forward-backward algorithm can be viewed as a hybrid splitting algorithm whose special instances turn out to be classical splitting methods; namely the forward-backward and Douglas-Rachford algorithms.

\paragraph{Relaxed Forward-Backward}
For $n \equiv 1$, we have $\JNS = \Id$, $A \eqdef \bA = A_1$, $\bB = B$ and the operator \eqref{operator} specializes to
\begin{equation}\label{fb}
  \frac{1}{2} \big[ R_{\ga A} + \Id \big] \big[ \Id - \ga B \big] = J_A \bpa{ \Id - \ga B } ~,
\end{equation}
so that $x_t = \bz_t = z_{1,t}$ given by \eqref{gfb-it} (resp. \eqref{seq-w-err} in the optimization case) follows exactly the iterations of the relaxed forward-backward algorithm \cite[Section~6]{Combettes04}, and its convergence properties under assumptions \ref{fix-la-err} and \ref{var-ga}. 

This comparison is of particular interest in the convex optimization case since it may be inspiring to study the convergence rate of the generalized forward-backward on the objective. Indeed, it is now known that the exact forward-backward algorithm enjoys a convergence rate in $O(1/t)$ on the objective \cite{Nesterov07,Bredies08}. Furthermore, there has been several accelerated multistep versions of the exact forward-backward in the literature \cite{Nesterov07,BeckTeboulle09,Tseng08} with a convergence rate of $O(1/t^2)$ on the objective (although no convergence guarantee on the iterate itself is given). Therefore, two possible perspectives of this work would be to investigate the convergence rate (on the objective of course) of the generalized forward-backward and to design a potential multistep acceleration. 

\paragraph{Relaxed Douglas-Rachford}
If we set $B \equiv 0$, the operator \eqref{operator} becomes
\begin{equation}
 \frac{1}{2} \big[ \RgA \RNS + \bId \big] ~.
\end{equation}
Taking $\ga_t = \bar{\ga} \in ]0,\pinfty[, \forall t$, $\bz_t$ provided by \eqref{gfb-it} (resp. \eqref{seq-w-err} in the optimization case) would be equivalent to applying the relaxed Douglas-Rachford algorithm on the product space $\bHh$ for solving $0 \in \sum_i A_i x$ \cite{Spingarn83,CombettesPesquet08}. The convergence statements of \tref{gfb-thm}-\ref{claimTz}-\ref{claimx} holds in this case under \ref{fix-la-err}-\ref{sol-non-empty}, $\la_t \in ]0,2[$ with $\sum_{t \in \N} \la_t(2-\la_t) = \pinfty$ and $\sum_{t\in\N} \lambda_t (\bnorm{\bEps_{1,t}} + \bnorm{\bEps_{2,t}}) < \pinfty$; see Remark~\ref{rem:fixedga} where $\alpha=\frac{1}{2}$ by \pref{Tb}.

\paragraph{Resolvents of the sum of monotone operators}
The generalized for\-ward-back\-ward algorithm provides yet another way for computing the resolvent of the sum of maximal monotone operators at a point $y \in \ran(\Id + \sum_i A_i)$. It is sufficient to take in \eqref{inclusion} $Bx = x - y$ and $\be = 1$. It would be interesting to compare this algorithm with the Douglas-Rachford and Dykstra-based variants \cite{Combettes09b}. This will be left to a future work.

\subsection{Relation to other work}
\label{relation}
\paragraph{Relation to \cite{MonteiroSvaiter10}}
In a finite-dimensional setting, these authors propose an algorithm for the monotone inclusion problem consisting of the sum of a continuous monotone map and a set-valued maximal monotone operator, introducing a ``block-decomposition'' hybrid proximal extragradient (HPE).
They also derive the corresponding convergence rates.

More precisely, our optimization problem can be rewritten in the form considered in \cite[Section 5.3, (51)]{MonteiroSvaiter10}. Indeed, \eqref{minFnG} is equivalent to the linearly constrained convex problem
\begin{equation}\label{HPE-form}
\umin{\bz = (z_i)_i \in \bHh} ~ F\pa{ \scriptstyle{\sum_i} \displaystyle \om_i z_i } + \sum_i G_i(z_i) \qobjq{such that} \projP(\bz) = 0~,
\end{equation}
As $\projP$ is self-adjoint, $\bz$ is an optimal solution if and only if there exists $\bv=(v_i)_i \in \bHh$ such that 
\[
\mathbf{0} \in \pa{ \nabla F(\scriptstyle{\sum_i} \displaystyle \om_i z_i) }_i + \pa{ \partial G_i(z_i)/w_i }_i + \projP(\bv) \qobjq{and} \projP(\bz) = 0 ~,
\]
and the minimizer is given by $x = \sum_i \om_i z_i$.

Let $\varsigma \in ]0,1]$ and $\ga = \varsigma \frac{2 \varsigma \be}{1+\sqrt{1+4\varsigma^2\be^2}}$. Transposed to our setting, their iterations read:

\begin{algorithm}[H]
\label{HPE-iter}
\caption{Iterations Block-Decomposition HPE \cite{MonteiroSvaiter10}.}
\For{$i \in \bbket{1,n}$}{%
		 $\displaystyle z_i \leftarrow \prox_{\frac{\ga}{\om_i} G_i} \bpa{ \ga^2 x + \pa{ 1 - \ga^2 } z_i - \ga \nabla F(x) + \ga \pa{v_i - u} }$;%
	}
\For{$i \in \bbket{1,n}$}{%
		 $\displaystyle v_i \leftarrow v_i - \ga z_i + \ga x$;%
	}
$x \leftarrow \sum_i \om_i z_i$;\\
$u \leftarrow \sum_i \om_i v_i$.
\end{algorithm}

\noindent The update of the $z_i$'s in this iteration shares similarities with the one in Algorithm~\ref{gfb-algo}, where $\ga$ is identified with $\gamma_t$. Nonetheless, the two algorithms are different in some important ways. Our algorithm is robust to errors while there is no proof of such robustness for HPE. Furthermore, HPE carries additional (dual) variables hence increasing the computational load of the algorithm. Finally, unlike our algorithm, the step-size in HPE $\ga$ cannot be iteration-varying, and $\ga < \varsigma$ whatever the Lipschitz constant of $\nabla F$, which is a stronger condition than ours. The latter can have important practical impact.
 
\paragraph{Relation to \cite{CombettesPesquet11}}
While this paper was being released, these authors independently developed another algorithm to solve a class of problems that covers \eqref{inclusion}. They rely on the classical Kuhn-Tucker theory and propose a primal-dual splitting algorithm for solving monotone inclusions involving a mixture of sums, linear compositions, and parallel sums (inf-convolution in convex optimization) of set-valued and Lipschitz operators. More precisely, the authors exploit the fact that the primal and dual problems have a similar structure, cast the problem as finding a zero of the sum of a Lipschitz continuous monotone map with a maximal monotone operator whose resolvent is easily computable. They solve the corresponding monotone inclusion using an inexact version of Tseng's forward-backward-forward splitting algorithm \cite{Tseng00}.

Removing the parallel sum and taking the linear operators as the identity in \cite[(1.1)]{CombettesPesquet11}, one recovers problem \eqref{inclusion}. For the sake of simplicity and space saving we do not reproduce here in full their algorithm. However, adapted to the optimization problem $\min_{x} F(x)+\sum_i G_i(L_ix)$, where each $L_i$ is a bounded linear operator, their algorithm reads ($\adj{G_i}$ is the \textit{Legendre-Fenchel conjugate} of $G_i$):

\begin{algorithm}[H]
\caption{Iterations Primal-Dual Combettes-Pesquet \cite{CombettesPesquet11}.
\label{CoPe-iter}}
$y \leftarrow x - \ga_t \pa{\nabla F(x) + \sum_{i=1}^n \adj{L_i}v_i}$\\
\For{$i \in \bbket{1,n}$}{
		 $\displaystyle z_i \leftarrow v_i + \ga_t L_ix$;\\
		 $\displaystyle v_i \leftarrow v_i - z_i + \prox_{\ga_t \adj{G_i}}(z_i) + \ga_t L_iy$;
	}
$x \leftarrow x - \ga_t \pa{ \nabla F \pa{y} + \sum_{i=1}^n \adj{L_i} \bpa{ \prox_{\ga_t \adj{G_i}}(z_i) } }$;
\end{algorithm}

\noindent Recall that the proximity operator of $\adj{G_i}$ can be easily deduced from that of $G_i$ using Moreau's identity. Taking $L_i=\Id$ in Algorithm~\ref{CoPe-iter} solves \eqref{minFnG}. Similarly to the the generalized forward-backward, this algorithm allows for inexact computations of the involved operators and for varying step-size $\ga_t$. However, if $\ell \eqdef 1/\be$ denotes the Lipschitz constant of $F$, the bound on our step-size sequence is $2/\ell$ while theirs is $1/(\ell + \sqrt{n})$, at least twice lower and degrading as $n$ increases. While we solve
the primal problem, their algorithm solves both the primal and dual ones, which at least doubles the number of auxiliary variables required. Moreover, it also requires two calls to the gradient of $F$ per iteration. Nonetheless, their algorithm is able to solve a more general class of problems.

Finally, let us notice that if one want to use the composition with linear operators, each iteration requires two calls to each one of them and two calls to their adjoints, what can be computationally more expensive than computing directly the proximity operators of the $G_i \circ L_i$'s (see \sref{numeric}).

It is also noteworthy to point out that Tseng's forward-backward-forward algorithm they used is a special case of the HPE method whose iteration complexity results were derived in \cite{MonteiroSvaiter10b}.

\section{Numerical experiments}
\label{numeric}

This section applies the generalized for\-ward-back\-ward to image processing problems. The problems are selected so that other splitting algorithms can be applied as well and compared fairly. In the following, $\Id$ denotes the identity operator on the appropriate space to be understood from the context, $N$ is a positive integer and $\RNN \equiv \R^{N \times N}$ is the set of images of size $N \times N$ pixels.

\subsection{Variational Image Restoration}

We consider a class of inverse problem regularizations, where one wants to recover an (unknown) high resolution image $y_0 \in \RNN$ from noisy low resolution observations $y = \Phi y_0 + w \in \RNN$. We report results using several ill-posed linear operators $\Phi : \RNN \to \RNN$, and focus our attention to convolution and masking operator, and a combination of these operators. In the numerical experiments, the noise vector $w \in \RNN$ is a realization of an additive white Gaussian noise of variance$\sig_w^2$.

The restored image $\hat{y_0}=W\hat{x}$ is obtained by optimizing the coefficients $\hat{x} \in \Hh$ in a redundant wavelet frame~\cite{Mallat99}, where $W : \Hh \to \RNN$ is the wavelet synthesis operator. The wavelet atoms are normalized so that $W$ is a Parseval tight frame, \ie it satisfies $W\adj{W} = \Id$. In this setting, the coefficients are vectors $x \in \Hh\equiv \RNJ$ where the redundancy $J=3 J_0 + 1$ depends on the number of scales $J_0$ of the wavelet transform.

The general variational problem for the recovery reads
\begin{equation}\label{VarPb}
  \umin{x \in \Hh} \{\Psi(x) \eqdef \frac{1}{2} \norm{y - \Phi W x}^2 + \musp \normbk{x} + \mutv \normtv{W x}\} ~.
\end{equation}
The first term in the summand is the \textit{data-fidelity} term, which is taken to be a squared $\ell_2$-norm to reflect the additive white Gaussianity of the noise. The second and third terms are \textit{regularizations}, enforcing priors assumed to be satisfied by the original image. The first regularization is a $\ell_1/\ell_2$-\textit{norm by blocks}, inducing structured sparsity on the solution. The second regularization is a discrete \textit{total variation semi-norm}, inducing sparsity on the gradient of the restored image. The scalars $\musp$ and $\mutv$ are weights -- so-called regularization parameters -- to balance between each terms of the energy $\Psi$. We now detail the properties of each of these three terms.

\subsubsection{Data-Fidelity $\frac{1}{2} \norm{y - \Phi W x}^2$\label{data-f}}

For the inpainting inverse problem, one considers a masking operator 
\begin{equation*}
  \pa{\Mr \, y}_p \eqdef \begin{cases}
	0    & \sobjs{if} p \in \Om , \\
        y_p  & \sobjs{otherwise.}   \end{cases}
\end{equation*}
Where $\Om$ is a set of pixels, taking into account missing or defective sensors that deteriorate the observations; we will denote $\rho = \abs{\Om}/N^2$ the ratio of missing pixels. For the deblurring inverse problem, we consider a convolution with a discrete Gaussian filter of width $\sig$, $\Ks : y \mapsto g_{\sig} \ast y$, normalized to a unit mass. This simulates a defocus effect or low-resolution sensors. In the following, we thus consider $\Phi$ being equal either to $\Mr$, $\Ks$ or the composition $\Mr  \Ks$.

\medskip

Denoting $L \eqdef \Phi W$, the fidelity term thus reads $F(x) = \frac{1}{2} \norm{y - L x}^2$. The function $F$ corresponds to the smooth term in \eqref{minFnG}. Its gra\-dient $\nabla F : x \mapsto \adj{L} \pa{Lx - y}$ is Lipschitz-continuous with constant $\beta^{-1} \leq \norm{\Phi W}^2 = 1$.

For any $\ga > 0$, the proximity operator of $F$ reads 
\begin{equation}\label{proxF}
 \prox_{\ga F}(x) = \inv{\pa{\Id+\ga \adj{L}L}}\pa{x+\ga \adj{L}y}.
\end{equation}
The vector $\adj{L}y$ can be precomputed, but inverting $\Id+\ga \adj{L}L$ may be problematic. For $L \equiv \Id$, this is trivial. For inpainting or deblurring alone, as $\Phi$ is associated to a Parseval tight frame, $L \equiv \Mr W$ or $L \equiv \Ks W$, the Sherman-Morrison-Woodbury formula gives
\begin{align}
  \inv{\pa{\Id+\ga \adj{L}L}} = & \Id - \adj{L} \inv{\pa{\Id+\ga L\adj{L}}} L \nonumber \\
			      = & \Id - \adj{W} \adj{\Phi} \inv{\pa{\Id+\ga \Phi \adj{\Phi} }} \Phi W \label{proxPhiW} ~.
\end{align}
Since $\Mr$ (resp. $\Ks$) is a diagonal operator in the pixel domain (resp. Fourier domain), \eqref{proxPhiW} can be computed in $O(N^2)$ (resp. $O(N^2 \log N)$) operations. However, the composite case $L \equiv \Mr \Ks W$ is more involved. An auxiliary variable is required, replacing $F : \Hh \to \R$ by ${\tilde{F} : \Hh \times \RNN \to ]\minfty,\pinfty]}$ defined by
\begin{equation}\label{tildeF}
  \tilde{F}(x,u) = \frac{1}{2} \norm{y - \Mr u}^2 + \iota_{C_{\Ks W}}(x,u) = G_1(x,u) + G_2(x,u) ~,
\end{equation}
where $C_{\Ks W} \eqdef \setdef{\pa{x,u} \in \Hh \times \RNN}{u = \Ks W x}$. Only then, $\prox_{\ga G_1}$ can be computed from \eqref{proxF}, and $\prox_{\ga G_2}$ is the orthogonal projection on \linebreak ${\ker([\Id, -\Ks W])}$ \cite{Dupe11b,Briceno-Arias10}, which involves a similar inversion as in \eqref{proxPhiW}.

\subsubsection{Regularization $\musp \normbk{x}$\label{sparse-reg}}

Sparsity-promoting regularizations over wavelet (and beyond) coefficients are popular to solve a wide range of inverse problems~\cite{Mallat99}. Figure \ref{noblock}, left, shows an example of orthogonal wavelet coefficients of a natural image, where most of the coefficients have small amplitude, they are thus quite sparse. A way to enforce this sparsity is to use the $\ell_1$-norm of the coefficients $\norm{x}_1 = \sum_p |x_p|$. 

The presence of edges or textures creates structured local dependencies in the wavelet coefficients of natural images. A way to take into account those dependencies is to replace the absolute value of the coefficients in the $\ell_1$-norm by the $\ell_2$-norm of groups (or \textit{blocks}) of coefficients \cite{yuan-group-lasso}. This is known as the mixed $\ell_1/\ell_2$-norm by
\begin{equation}\label{normbk}
  \normbk{x} = \sum_{\bk \in \Bb} \mu_\bk \norm{x_\bk} = \sum_{\bk \in \Bb} \mu_\bk \sqrt{\sum_{p \in \bk} x_p^2} ~,
\end{equation}
where $p$ indexes the coefficients, the blocks $\bk$ are sets of indexes, the \textit{block-structure} $\Bb$ is a collection of blocks and $x_\bk \eqdef \pa{x_p}_{p \in \bk}$ is a subvector of $x$. The positive scalars $\mu_\bk$ are weights tuning the influence of each block. It is a norm on $\Hh$ as soon as $\Bb$ covers the whole space, \ie $\foralls p \in \bbket{1,N}^2 \times \bbket{1,J}, \, \exists \bk \in \Bb \sobjs{s.t.} p \in \Bb$. Note that for $\Bb \equiv \bigcup_p \{p\}$ and $\mu_{ \ket{p} } \equiv 1$ for all $p$, it reduces to the $\ell_1$-norm.

We mentionned in the introduction that the proximal operator of a $\ell_1$-norm is a soft-thresholding on the coefficients. Similarly, it is easy to show that whenever $\Bb$ is \textit{non-overlapping}, \ie ${\foralls \bk,\bk\prim \in \Bb}, \, {\bk \cap \bk\prim = \emptyset}$, the proximity operator of $\normbk{\cdot}$ is a soft-thresholding by block
\begin{equation*}
  \prox_{\mu \normbk{\cdot}}\bpa{\pa{x_\bk}_\bk} = \bpa{\Th_{\mu_\bk \cdot \mu}(x_\bk)}_\bk ~,
\end{equation*}
with
\begin{equation*}
\Th_{\mu}(x_\bk) = 
\begin{cases}
0                                       & \sobjs{if} \norm{x_\bk} < \mu~, \\ 
\pa{ 1-\frac{\mu}{\norm{x_\bk}} } x_\bk & \sobjs{otherwise~,}
\end{cases}
\end{equation*}
and the coefficients $x_p$ not covered by $\Bb$ remaining unaltered.

Non-overlapping block structures break the \textit{translation invariance} that is underlying most traditional image models. To restore this invariance, one can consider overlapping blocks, as illustrated in \cff{blocks2}. Computing $\prox_{\normbk{\cdot}}$ in this case is not as simple as for the non-overlapping case, because the blocks cannot be treated separately. For tree-structured blocks (\ie $\bk \cap \bk\prim \neq \emptyset \Rightarrow \bk \subset \bk\prim \sobjs{or} \bk\prim \subset \bk$), \cite{Jenatton10} proposes a method involving the computation of a min-cost flow. This could be computationally expensive and do not address the general case anyway. Instead, it is always possible to decompose the block structure as a finite union of non-overlapping sub-structures $\Bb = \bigcup_{i} \Bb_i$. The resulting term can finally be split into $\normbk{x} = \sum_{\bk \in \Bb} \norm{x_\bk} = \sum_i \sum_{\bk \in \Bb_i} \norm{x_\bk} = \sum_i \normbki{x}$, where each $\normbki{\cdot}$ is simple.

\begin{figure}[!ht]
\bigcenter
\subfigure[$\norm{x}_1 = \sum_p \abs{x_p}$]{
\includegraphics[width=4cm]{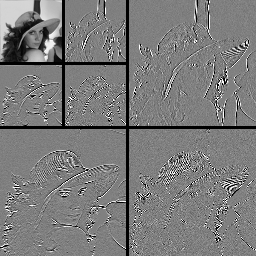}
\label{noblock}}
\subfigure[$\normbk{x} = \sum_{\color{red} \bk \in \Bb} \norm{x_{\color{red} \bk}}$]{
\includegraphics[width=4cm]{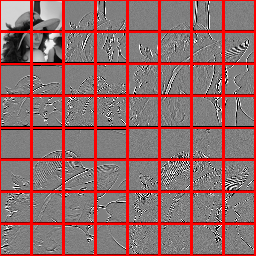}
\label{blocks1}}
\subfigure[$\normbk{x}%
= \norm{x}_{1,2}^{\textcolor{red}{\Bb_1}} + \norm{x}_{1,2}^{\textcolor{blue}{\Bb_2}}%
$]{
\includegraphics[width=4cm]{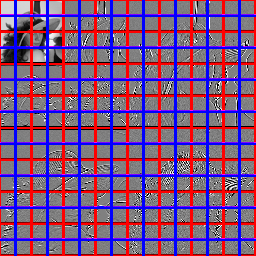}
\label{blocks2} }
\caption{Illustration of the block $\ell_1/\ell_2$-norm. \subref{noblock} sparsity of the image in an orthogonal wavelet decomposition (gray pixels corresponds to low coefficients); \subref{blocks1} a non-overlapping block structure; \subref{blocks2} splitting of a more complex structure into two non-overlapping layers.}
\label{blocks}
\end{figure}

In our numerical experiments where $\Hh \equiv \RNJ$, coefficients within each resolution level (from $1$ to $J$) and each subband are grouped according to all possible square spatial blocks of size $S \times S$; which can be decomposed into $S^2$ non-overlapping block structures.

\subsubsection{Regularization $\mutv \normtv{W x}$\label{tv-reg}}

The second regularization favors piecewise-smooth images, by inducing sparsity on its gradient \cite{ROF92}. The total variation semi-norm can be viewed as a specific instance of $\ell_1/\ell_2$-norm, $\normtv{y} = \normbktv{\gradient y}$, with
\begin{equation*}
 \gradient : \left\{ \begin{array}{rcc}
          \RNN & \longrightarrow &  \RG\\
          y    & \longmapsto & (V \ast y, H \ast y)
          \end{array} \right.%
\qobjq{and} \normbktv{\pa{v,h}} = \sum_{p \in \bbket{1,N}^2} \sqrt{{v_p}^2 + {h_p}^2} ~,
\end{equation*}
where the image gradient is computed by finite differences through convolution with a vertical filter $V$ and a horizontal filter $H$, and $\Bb_{\mathrm{TV}}$ is clearly non-overlapping. For some special gradient filters, the modified TV semi-norm can be splitted into simple functions, see for instance \cite{CombettesPesquet08,Pustelnik11}. However, we consider more conventional filters
\begin{equation*}
V =\left( \begin{array}{cc}  -1 & 0 \\ 1 & 0 \end{array} \right)%
\qobjq{and}%
H =\left( \begin{array}{cc}  -1 & 1 \\ 0 & 0 \end{array} \right)
\end{equation*}
centered in the upper-left corner. Introducing an auxiliary variable as ad\-vo\-ca\-ted in \eqref{tildeF}, the main difficulty remains to invert the operator $\pa{\Id + \ga \gradient \adj{\gradient}}$. Under appropriate boundary conditions, this can be done in the Fourier domain in $O(N^2 \log(N))$ operations.

\subsection{Resolution with Splitting Methods}

\subsubsection{Tested Algorithms}

We now give the details of the different splitting strategies required to apply the three tested algorithms to \eqref{VarPb}.

\paragraph{Generalized Forward-Backward (\GFB)} The problem is rewritten under the form \eqref{minFnG} as
\begin{equation}\label{GFB-form}
 \umin{\substack{x \in \Hh\\u \in \RG}} \frac{1}{2} \norm{y - \Mr \Ks W x}^2 + \musp \sum_{i=1}^{S^2} \normbki{x} + \mutv \normbktv{u} + \iota_{C_{\gradient W}}(x,u) ~,
\end{equation}
with $F(x) \equiv \frac{1}{2} \norm{y - \Mr \Ks W x}^2$ and $n \equiv S^2 + 2$. The indicator function $\iota_{C_{\gradient W}}$ is defined similarly as in \eqref{tildeF}. In \aref{gfb-algo}, we set equal weights $\om_i \equiv 1/n$, a constant gradient step-size $\ga \equiv 1.8 \be$ and a constant relaxation parameter to $\la \equiv 1$.

\paragraph{Relaxed Douglas-Rachford (\DR)} Here the problem is split as
\begin{equation*}
 \umin{\substack{x \in \Hh\\u_1 \in \RNN\\u_2 \in \RG}} \frac{1}{2} \norm{y - \Mr \, u_1}^2 + \iota_{C_{\Ks W}}(x,u_1) + \musp \sum_{i=1}^{S^2} \normbki{x} + \mutv \normbktv{u_2} + \iota_{C_{\gradient W}}(x,u_2) ~,
\end{equation*}
and solved with \aref{gfb-algo}, where $F \equiv 0$ and $n \equiv S^2 + 4$. As mentioned in \sref{discussion}, this corresponds to a relaxed version of the Douglas-Rachford algorithm, with best results when $\ga \equiv 1/n$.

\paragraph{Primal-Dual Chambolle-Pock (\ChPo)} A way to avoid operator inversions is to rewrite the original problem as
\begin{equation*}
  \umin{x \in \Hh} G (\La x)
\end{equation*}
where
\begin{equation*}
 \La : \left\{ \begin{array}{ccc} \Hh & \longrightarrow & \RNN \times \pa{\Hh}^{S^2} \times \RG\\
     x & \longmapsto & \bpa{ \Mr \Ks W x, x, \dots, x, \gradient W x} \end{array} \right.~,
\end{equation*}
and
\begin{equation*}
G : \left\{ \begin{array}{rcl} \RNN \times \pa{\Hh}^{S^2} \times \RG & \longrightarrow & \R \\
    \bpa{ u_1, x_1, \dots, x_{S^2}, g} & \longmapsto & \frac{1}{2} \norm{y - u_1}^2 + \musp \sum_{i=1}^{S^2} \norm{x_i}_{1,2}^{\Bb_i} + \mutv \normbktv{g} \end{array} \right.~.
\end{equation*}
The operator $\La$ is a concatenation of linear operators and its adjoint is easy to compute, and $G$ is simple, being a separable mixture of simple functions. Note that this is not the only splitting possible. For instance, one can write the problem on a product space as $\umin{(x_i)_i \in \bHh} \iota_{\bSs}(\pa{x_i}_i) + \sum_i G_i(\La_i x_i)$, where $G_i$ is each of the functions in $G$ above, and $\La_i$ is each of the linear operators in $\La$.

To solve this, we here use the primal-dual relaxed Arrow-Hurwicz algorithm described in \cite{ChambollePock11}. According to the notations in that paper, we set the parameters $\sig \equiv 1$, $\tau \equiv \frac{0.9}{\sig(1+S^2+8)}$ and $\th \equiv 1$.

\paragraph{Block-Decomposition Hybrid Proximal Extragradient (\HPE)} We split the problem written in \eqref{GFB-form} according to \eqref{HPE-form}, and set equals weights $w_i \equiv 1/n$. According to \sref{relation}, we set the parameter $\varsigma \equiv 0.9$.

\paragraph{Primal-Dual Combettes-Pesquet (\CoPe)} Finally, the problem takes its simplest form
\begin{equation}\label{CoPe-form}
 \umin{x \in \Hh} \frac{1}{2} \norm{y - \Mr \Ks W x}^2 + \musp \sum_{i=1}^{S^2} \normbki{x} + \mutv \normbktv{\gradient W x}~.
\end{equation}
As long as $\mutv \equiv 0$ (no TV-regularization), this is exactly \eqref{GFB-form}; we apply \aref{CoPe-iter} where $L_i \equiv \Id$ for all $i$ and $\ga \equiv 0.9/(1+S)$. However with TV-regularization, we avoid the introduction of the auxiliary variable $u$ with $L_{S^2+1} \equiv \gradient W$ and $\ga \equiv 0.9/(1+\sqrt{S^2 + 8})$.

\subsubsection{Results}

All experiments were performed on a discrete image of width $N \equiv 256$, with values in the range $[0,1]$. The additive white Gaussian noise has standard-deviation $\sig_w \equiv 2.5 \cdot 10^{-2}$. The reconstruction operator $W$ uses non-separable, bi-dimensional Daubechies wavelets with 2 vanishing moments. It is implemented such that each atom has norm $2^{-j}$, with $j \in \bbket{1,J_0}$ and where $J_0$ is the coarsest resolution level. Accordingly, we set the weights $\mu_\bk$ in the $\ell_1/\ell_2$-norm to $2^{-j}$ at the resolution level $j$ of the coefficients in block $\bk$. We use $J_0 \equiv 4$, resulting in a dictionary with redundancy $J = 3 J_0 +1 = 13$. All algorithms are implemented in \textsc{Matlab}\footnote{An implementation of the generalized forward-backward, as well as the codes and materials for the experiments, are available at \url{http://www.ceremade.dauphine.fr/~raguet/}}.

Results are presented in Figures \ref{deconv}, \ref{inpaint}, \ref{composite} and \ref{composite-tv}. For each problem, the five algorithms were run $1000$ iterations (initialized at zero), while monitoring their objective functional values $\Psi$ along iterations. $\Psi_{\min}$ is fixed as the minimum value reached over the five algorithms (in our experiments, this was always the generalized forward-backward), and evolution of the objectives compared to $\Psi_{\min}$ is displayed for the first $100$ iterations. Because the computational complexity of an iteration may vary between algorithms, computation times for $100$ iterations (no parallel implementation) are given beside the curves. Below the energy decay graph, one can find from left to right the original image, the degraded image and the restored image after $100$ iterations of generalized forward-backward. Degraded and restored images quality are given in term of the signal-to-noise ratio ($\ins{SNR}$).
 
\paragraph{Comparison to algorithms that do not use the (gradient) explicit step (\ChPo, \DR)} For the first three experiments, there is no total variation regularization. In the deblurring task (\cff{deconv}), blocks of size $2 \times 2$ are used. \GFB is slightly better than the others and iteration cost of \ChPo is too high for this problem. When increasing the number of block structures (inpainting, \cff{inpaint}, size $4 \times 4$) computation times tends to be similar but \GFB clearly outperforms the others for the task. However, one advantage of using the gradient becomes obvious in the composite case (\ie $\Phi \equiv \Mr \Ks$): in \cff{composite}, \DR performs hardly better than \ChPo. Indeed, in contrast to previous cases (see \sref{data-f}), $F$ is not simple anymore and the introduction of the auxiliary variable decreases the efficiency of each iteration of \DR. This phenomenon is further illustrated in the last case, where the total variation is added, introducing another auxiliary variable.

\paragraph{Comparison to algorithms that use the (gradient) explicit step (\HPE, \CoPe)} In the first experiment where $n$ is small, the iterations of \HPE and \CoPe are almost as efficient as the iterations of \GFB but take more time to compute, especially for \CoPe that needs twice more calls to $\nabla F$. In the second setting, \HPE and \CoPe are hardly better than \DR, maybe suffering from small gradient step-sizes. They perform better in the composite setting, but require more computional time than \GFB. In the last setting, iterations of \CoPe are still not as efficient as iterations of \GFB in spite of their higher computational load due to the composition by the linear operator $\gradient W$ (see \eqref{CoPe-form}).

\bigskip

Finally, let us note that in the composite case (\ie $\Phi \equiv \Mr \Ks$), the $\ins{SNR}$ of the restored image is greater when using both regularizations rather than one or the other separately. Moreover, we observed that it yields restorations more robust to variations of the parameters $\musp$ and $\mutv$. Those arguments seem to be in favor of mixed regularizations.

\section{Conclusion}

We have introduced in this paper a novel proximal splitting method able to handle convex functionals that are the sum of a smooth term and several simple functions. It generalizes existing schemes by enlarging the class of problems that can be solved efficiently with proximal methods to the case where one of the function is smooth but not simple. We provided theoretical guarantees on the convergence and robustness of the algorithm even for the more general problem of finding the zeros of the sum of maximal monotone operators, one of which is also co-coercive. Numerical experiments on convex optimization problems encountered in inverse problems show evidence of the advantages of our approach for large-scale imaging problems.

In analogy with first-order methods such as the forward-backward algorithm, establishing convergence rates (on the objective) and designing multistep accelerations are possible perspectives that we leave to a future work.

\newpage
\floatstyle{boxed}
\restylefloat{figure}

\begin{figure}[H]
\bigcenter
\subfiguretopcaptrue
\subfigure[$\log(\Psi - \Psi_{\min})$ vs. iteration \#]{
\includegraphics[width=.5\textwidth]{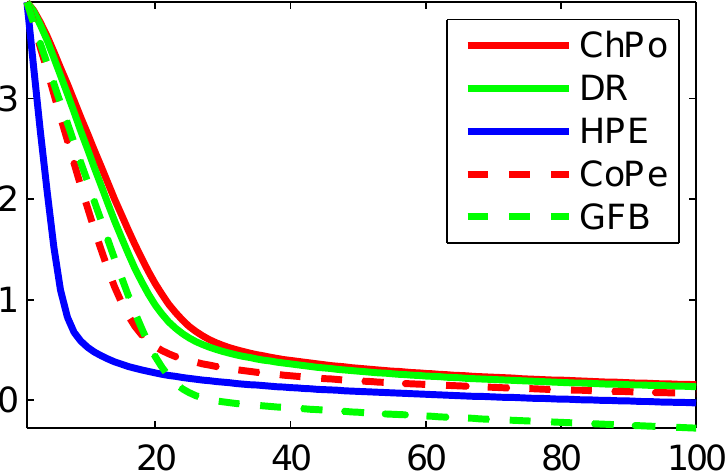}
\label{deconv-curves}}
\subtable[computing time]{
\begin{tabular}{>{\Large}l>{\Large}l}
		   &		     \\
	$t_{\ChPo}$  &$= 153~\ins{s}$\\
	$t_{\DR}$  &$= 95~\ins{s}$\\
	$t_{\HPE}$  &$= 148~\ins{s}$\\
	$t_{\CoPe}$ &$= 235~\ins{s}$\\
	$t_{\GFB}$ &$= 73~\ins{s}$\\
\end{tabular}} \\
\subfiguretopcapfalse
\subfigure[LaBoute $y_0$]{
\includegraphics[width=.3\textwidth]{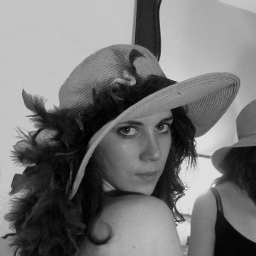}
\label{deconv-img} }
\subfigure[${y = \Ks y_0 + w}, \, 19.63~\ins{dB}$]{
\includegraphics[width=.3\textwidth]{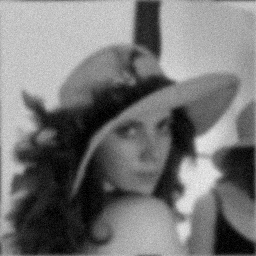}
\label{deconv-obs} }
\subfigure[$\hat{y_0} = W\hat{x}, \, 22.45~\ins{dB}$]{
\includegraphics[width=.3\textwidth]{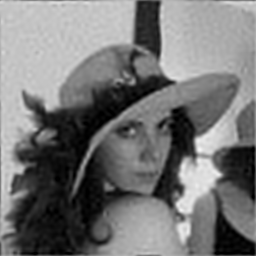}
\label{deconv-recov} }
\caption{Deblurring: $\sig = 2$; $\musp = 1.3 \cdot 10^{-3}$; $S = 2$; $\mutv = 0$.}
\label{deconv}
\end{figure}

\vspace{-.5cm}

\begin{figure}[H]
\bigcenter
\subfiguretopcaptrue
\subfigure[$\log(\Psi - \Psi_{\min})$ vs. iteration \#]{
\includegraphics[width=.5\textwidth]{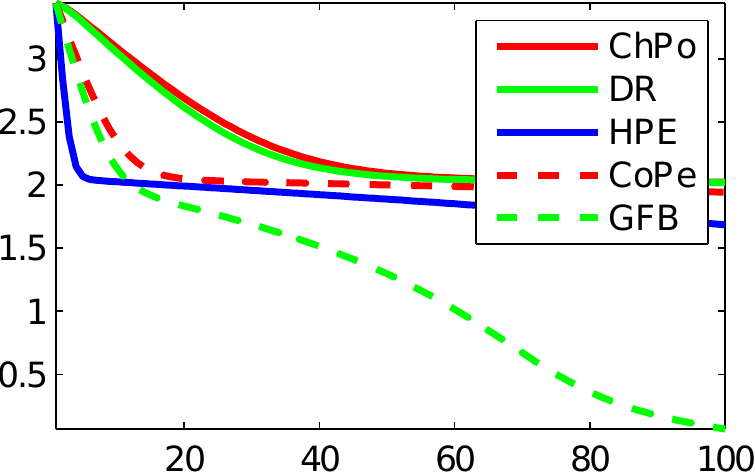}
\label{inpaint-curves}}
\subtable[computing time]{
\begin{tabular}{>{\Large}l>{\Large}l}
		   &		     \\
	$t_{\ChPo}$  &$= 229~\ins{s}$\\
	$t_{\DR}$  &$= 219~\ins{s}$\\
	$t_{\HPE}$  &$= 352~\ins{s}$\\
	$t_{\CoPe}$ &$= 340~\ins{s}$\\
	$t_{\GFB}$ &$= 203~\ins{s}$\\
\end{tabular}} \\
\subfiguretopcapfalse
\subfigure[LaBoute $y_0$]{
\includegraphics[width=.3\textwidth]{images/LaBoute.png}
\label{inpaint-img} }
\subfigure[$y = \Mr y_0 + w, \, 1.54~\ins{dB} $]{
\includegraphics[width=.3\textwidth]{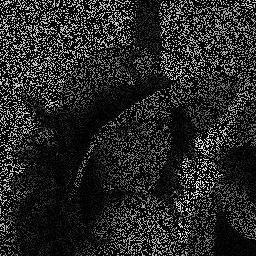}
\label{inpaint-obs} }
\subfigure[$\hat{y_0} = W\hat{x}, \, 21.66~\ins{dB}$]{
\includegraphics[width=.3\textwidth]{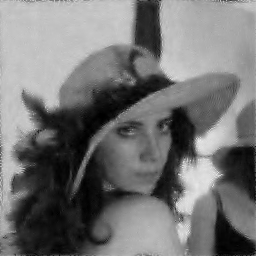}
\label{inpaint-recov} }
\caption{Inpainting: $\rho = 0.7$; $\musp = 2.6 \cdot 10^{-3}$; $S = 4$; $\mutv = 0$.}
\label{inpaint}
\end{figure}

\begin{figure}[H]
\bigcenter
\subfiguretopcaptrue
\subfigure[$\log(\Psi - \Psi_{\min})$ vs. iteration \#]{
\includegraphics[width=.5\textwidth]{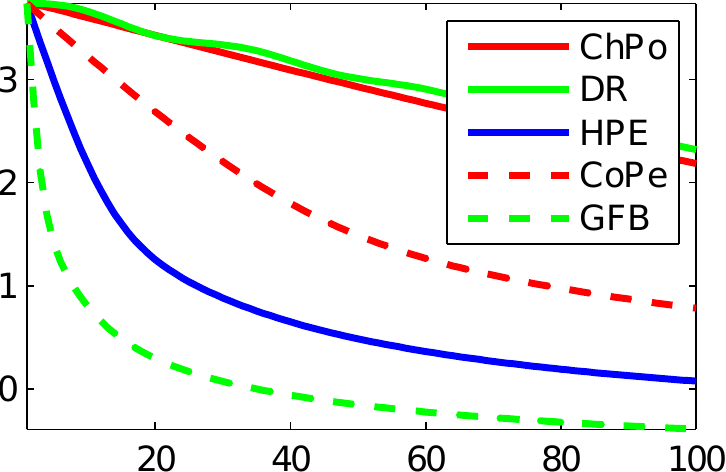}
\label{composite-curves}}
\subtable[computing time]{
\begin{tabular}{>{\Large}l>{\Large}l}
		   &		     \\
	$t_{\ChPo}$  &$= 313~\ins{s}$\\
	$t_{\DR}$  &$= 256~\ins{s}$\\
	$t_{\HPE}$  &$= 342~\ins{s}$\\
	$t_{\CoPe}$ &$= 268~\ins{s}$\\
	$t_{\GFB}$ &$= 233~\ins{s}$\\
\end{tabular}} \\
\subfiguretopcapfalse
\subfigure[LaBoute $y_0$]{
\includegraphics[width=.3\textwidth]{images/LaBoute.png}
\label{composite-img} }
\subfigure[$y = \Mr \Ks y_0 + w, \, 3.93~\ins{dB}$]{
\includegraphics[width=.3\textwidth]{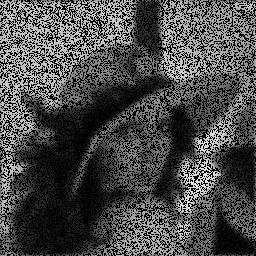}
\label{composite-obs} }
\subfigure[$\hat{y_0} = W\hat{x}, \, 20.77~\ins{dB}$]{
\includegraphics[width=.3\textwidth]{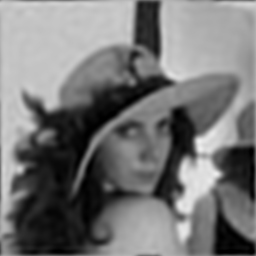}
\label{composite-recov} }
\caption{Composite: $\sig = 2$; $\rho = 0.4$; $\musp = 1.0 \cdot 10^{-3}$; $S = 4$; $\mutv = 0$.}
\label{composite}
\end{figure}

\vspace{-.5cm}

\begin{figure}[H]
\bigcenter
\subfiguretopcaptrue
\subfigure[$\log(\Psi - \Psi_{\min})$ vs. iteration \#]{
\includegraphics[width=.5\textwidth]{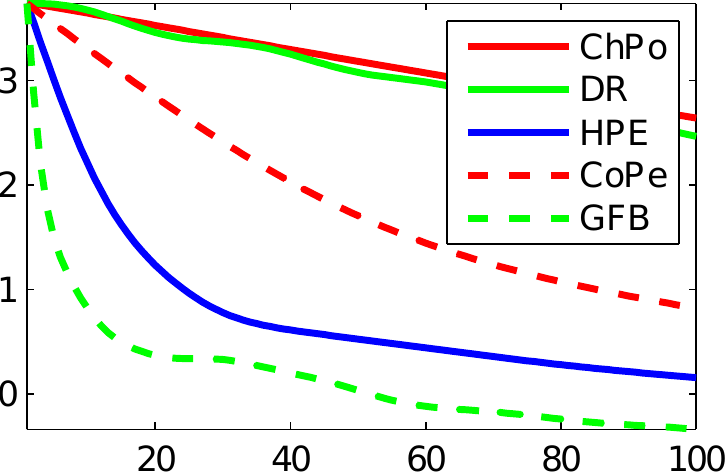}
\label{composite-tv-curves}}
\subtable[computing time]{
\begin{tabular}{>{\Large}l>{\Large}l}
		   &		     \\
	$t_{\ChPo}$  &$= 358~\ins{s}$\\
	$t_{\DR}$  &$= 294~\ins{s}$\\
	$t_{\HPE}$  &$= 409~\ins{s}$\\
	$t_{\CoPe}$ &$= 441~\ins{s}$\\
	$t_{\GFB}$ &$= 286~\ins{s}$\\
\end{tabular}} \\
\subfiguretopcapfalse
\subfigure[LaBoute $y_0$]{
\includegraphics[width=.3\textwidth]{images/LaBoute.png}
\label{composite-tv-img} }
\subfigure[$y = \Mr \Ks y_0 + w, \, 3.93~\ins{dB}$]{
\includegraphics[width=.3\textwidth]{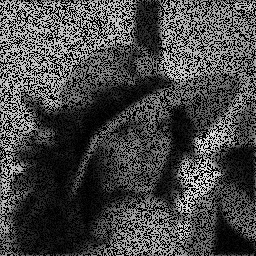}
\label{composite-tv-obs} }
\subfigure[$\hat{y_0} = W\hat{x}, \, 22.48~\ins{dB}$]{
\includegraphics[width=.3\textwidth]{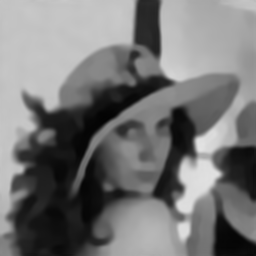}
\label{composite-tv-recov} }
\caption{Composite: $\sig = 2$; $\rho = 0.4$; $\musp = 5.0 \cdot 10^{-4}$; $S = 4$; $\mutv = 5.0 \cdot 10^{-3}$.}
\label{composite-tv}
\end{figure}

\floatstyle{plain}
\restylefloat{figure}


\bibliographystyle{plain}
\bibliography{GFB}

\begin{thebibliography}{10}

\bibitem{AckerPrestel80}
F.~Acker and M.~A. Prestel.
\newblock Convergence d'un sch{\'e}ma de minimisation altern{\'e}e.
\newblock {\em Annales de la facult{\'e} des sciences de Toulouse},
  5,2(1):1--9, 1980.

\bibitem{ArrowHurwicz58}
K.J. Arrow, L.~Hurwicz, and H.~Uzawa.
\newblock {\em Studies in linear and non-linear programming}.
\newblock Stanford University Press, 1958.

\bibitem{BaillonHaddad77}
J.-B. Baillon and G.~Haddad.
\newblock Quelques propri{\'e}t{\'e}s des op{\'e}rateurs angle-born{\'e}s et
  n-cycliquement monotones.
\newblock {\em Israel J. Math}, 26:137--150, 1977.

\bibitem{BauschkeCombettes11}
H.~H. Bauschke and P.~L. Combettes.
\newblock {\em Convex Analysis and Monotone Operator Theory in {H}ilbert
  Spaces.}
\newblock Springer-Verlag, New York, 2011.

\bibitem{Bauschke05}
H.~H. Bauschke, P.~L. Combettes, and S.~Reich.
\newblock The asymptotic behavior of the composition of two resolvents.
\newblock {\em Nonlinear Analysis-theory Methods \& Applications}, 60:283--301,
  2005.

\bibitem{BeckTeboulle09}
A.~Beck and M.~Teboulle.
\newblock A fast iterative shrinkage-thresholding algorithm for linear inverse
  problems.
\newblock {\em SIAM J. Img. Sci.}, 2(1):183--202, March 2009.

\bibitem{Bredies08}
K.~Bredies and D.A. Lorenz.
\newblock Linear convergence of iterative soft-thresholding.
\newblock {\em J. Fourier Anal. Appl.}, 14, Dec 2008.
\newblock 813--837.

\bibitem{Briceno-Arias09}
L.~M. Brice{\~{n}}o-Arias and P.~L. Combettes.
\newblock Convex variational formulation with smooth coupling for
  multicomponent signal decomposition and recovery.
\newblock {\em Numer. Math. Theory Methods Appl.}, 2:485--508, 2009.

\bibitem{Briceno-AriasCombettes11}
L.~M. Brice{\~{n}}o-Arias and P.~L. Combettes.
\newblock A monotone+skew splitting model for composite monotone inclusions in
  duality.
\newblock {\em SIAM J. Opt.}, to appear, 2011.

\bibitem{Briceno-Arias10}
L.~M. Brice{\~{n}}o-Arias, P.~L. Combettes, J.-C. Pesquet, and N.~Pustelnik.
\newblock Proximal algorithms for multicomponent image recovery problems.
\newblock {\em Journal of Mathematical Imaging and Vision}, pages 1--20, 2010.

\bibitem{Browder67}
F.~E. Browder.
\newblock Convergence theorems for sequences of nonlinear operators in banach
  spaces.
\newblock {\em Mathematische Zeitschrift}, 100(3):201--225, 1967.

\bibitem{ChambollePock11}
A.~Chambolle and T.~Pock.
\newblock A first-order primal-dual algorithm for convex problems with
  applications to imaging.
\newblock {\em J. Math. Imaging Vis.}, 40(1):120--145, May 2011.

\bibitem{chauxframes}
C.~Chaux, P.L. Combettes, J.-C. Pesquet, and V.R. Wajs.
\newblock A variational formulation for frame based inverse problems.
\newblock {\em Inverse Problems}, 23, June 2007.
\newblock 1495--1518.

\bibitem{Chaux09}
C.~Chaux, J.-C. Pesquet, and N.~Pustelnik.
\newblock Nested iterative algorithms for convex constrained image recovery
  problems.
\newblock {\em SIAM Journal on Imaging Sciences}, 2(2):730--762, 2009.

\bibitem{ChenTeboulle94}
G.~Chen and M.~Teboulle.
\newblock A proximal-based decomposition method for convex minimization
  problems.
\newblock {\em Math. Program.}, 64(1-3):81--101, 1994.

\bibitem{ChenRockafellar97}
G.~H.-G. Chen and R.~T. Rockafellar.
\newblock Convergence rates in forward--backward splitting.
\newblock {\em SIAM Journal on Optimization}, 7(2):421--444, 1997.

\bibitem{Combettes04}
P.~L. Combettes.
\newblock Solving monotone inclusions via compositions of nonexpansive averaged
  operators.
\newblock {\em Optimization}, 53(5-6):475--504, 2004.

\bibitem{Combettes09b}
P.~L. Combettes.
\newblock Iterative construction of the resolvent of a sum of maximal monotone
  opera- tors.
\newblock {\em J. Convex Anal.}, 16:727--748, 2009.

\bibitem{CombettesDualization10}
P.~L. Combettes, D.~D{\~{u}}ng, and B.~C. V{\~{u}}.
\newblock Dualization of signal recovery problems.
\newblock {\em Set-Valued and Variational Analysis}, 18:373--404, 2010.

\bibitem{Combettes2007a}
P.~L. Combettes and J.-. Pesquet.
\newblock A {D}ouglas-{R}achford splitting approach to nonsmooth convex
  variational signal recovery.
\newblock {\em IEEE J. Selected Topics in Signal Processing}, 1(4):564--574,
  2007.

\bibitem{CombettesPesquet08}
P.~L. Combettes and J.-C. Pesquet.
\newblock A proximal decomposition method for solving convex variational
  inverse problems.
\newblock {\em Inverse Problems}, 24(6):065014, 2008.

\bibitem{CombettesPesquet09}
P.~L. Combettes and J.-C. Pesquet.
\newblock {\em Fixed-Point Algorithms for Inverse Problems in Science and
  Engineering}, chapter Proximal Splitting Methods in Signal Processing, pages
  185--212.
\newblock Springer-Verlag, 2011.

\bibitem{CombettesPesquet11}
P.~L. Combettes and J.-C. Pesquet.
\newblock Primal-dual splitting algorithm for solving inclusions with mixtures
  of composite, {L}ipschitzian, and parallel-sum monotone operators.
\newblock {\em arXiv:1107.0081v1}, 30 June 2011.

\bibitem{CombettesWajs05}
P.~L. Combettes and V.~R. Wajs.
\newblock Signal recovery by proximal forward-backward splitting.
\newblock {\em SIAM Multiscale Modeling and Simulation}, 4(4):1168, 2005.

\bibitem{DaubechiesDemol04}
I.~Daubechies, M.~Defrise, and C.~De Mol.
\newblock An iterative thresholding algorithm for linear inverse problems with
  a sparsity constraint.
\newblock {\em Commun. on Pure and Appl. Math.}, 57(11):1413--1541, 2004.

\bibitem{Donoho95}
D.~L. Donoho.
\newblock De-noising by soft-thresholding.
\newblock {\em IEEE Transactions on Information Theory}, 41(3):613--627, 1995.

\bibitem{DouglasRachford56}
J.~Douglas and H.~H. Rachford.
\newblock On the numerical solution of heat conduction problems in two and
  three space variables.
\newblock {\em Transactions of the American Mathematical Society},
  82(2):421--439, 1956.

\bibitem{Dupe11a}
F.-X. Dup\'e, J.~M. Fadili, and J.-L. Starck.
\newblock Inverse problems with {P}oisson noise: Primal and primal-dual
  splitting.
\newblock In {\em International Conference on Image Processing (ICIP)},
  Brussels, 2011.

\bibitem{Dupe11b}
F.-X. Dup\'e, J.~M. Fadili, and J.-L. Starck.
\newblock Linear inverse problems with various noise models and mixed
  regularizations.
\newblock In {\em 1st International Workshop on New Computational Methods for
  Inverse Problems}, Paris, 2011.

\bibitem{Dupe2009}
F.-X. Dup{\'e}, M.J. Fadili, and J.-L. Starck.
\newblock A proximal iteration for deconvolving {P}oisson noisy images using
  sparse representations.
\newblock {\em IEEE Transactions on Image Processing}, 18(2):310--321, 2009.

\bibitem{Dupe11c}
F.-X. Dup\'e, M.J. Fadili, and J.-L. Starck.
\newblock Deconvolution under {P}oisson noise using exact data fidelity and
  synthesis or analysis sparsity priors.
\newblock {\em Statistical Methodology}, 2011.
\newblock in press.

\bibitem{Eckstein94}
J.~Eckstein.
\newblock Parallel alternating direction multiplier decomposition of convex
  programs.
\newblock {\em Journal of Optimization Theory and Applications}, 80(1):39--62,
  1994.

\bibitem{EcksteinBertsekas92}
J.~Eckstein and D.~P. Bertsekas.
\newblock On the {D}ouglas-{R}achford splitting method and the proximal point
  algorithm for maximal monotone operators.
\newblock {\em Math. Program.}, 55(3):293--318, 1992.

\bibitem{EcksteinSvaiter09}
J.~Eckstein and B.~F. Svaiter.
\newblock General projective splitting methods for sums of maximal monotone
  operators.
\newblock {\em SIAM J. Control Optim.}, 48(2):787--811, 2009.

\bibitem{Fadili06}
M.~J. Fadili, J.-L Starck, and F.~Murtagh.
\newblock Inpainting and zooming using sparse representations.
\newblock {\em The Computer Journal}, 52, 2007.
\newblock 64--79.

\bibitem{FadiliPeyre10}
M.J. Fadili and G.~Peyr{\'e}.
\newblock Total variation projection with first order schemes.
\newblock {\em {IEEE} Transactions on Image Processing}, 2010.
\newblock in press.

\bibitem{Figueiredo10b}
M.~Figueiredo and J.~Bioucas-Dias.
\newblock Restoration of {P}oissonian images using alternating direction
  optimization.
\newblock {\em IEEE Transactions on Image Processing}, 2010.

\bibitem{wave:nowak03}
M.A. Figueiredo and R.~Nowak.
\newblock An {EM} algorithm for wavelet-based image restoration.
\newblock {\em IEEE Transactions on Image Processing}, 12(8), 2003.
\newblock 906--916.

\bibitem{FortinGlowinski83}
M.~Fortin and R.~Glowinski.
\newblock {\em Augmented {L}agrangian Methods: Applications to the Numerical
  Solution of Boundary-Value Problems}.
\newblock Elsevier Science Publishers, Amsterdam, 1983.

\bibitem{Gabay83}
D.~Gabay.
\newblock Applications of the method of multipliers to variational
  inequalities.
\newblock In M.~Fortin and R.~Glowinski, editors, {\em Augmented {L}agrangian
  Methods: Applications to the Numerical Solution of Boundary-value Problems},
  Amsterdam, 1983. North-Holland Publishing Company.

\bibitem{GabayMercier76}
D.~Gabay and B.~Mercier.
\newblock A dual algorithm for the solution of nonlinear variational problems
  via finite element approximation.
\newblock {\em Computers \& Mathematics with Applications}, 2(1):17--40, 1976.

\bibitem{GlowinskiLeTallec89}
R.~Glowinski and P.~Le Tallec.
\newblock {\em Augmented {L}agrangian and Operator-Splitting Methods in
  Nonlinear Mechanics}.
\newblock SIAM, 1989.

\bibitem{Jenatton10}
R.~Jenatton, J.~Mairal, G.~Obozinski, and F.~Bach.
\newblock Proximal methods for hierarchical sparse coding.
\newblock {\em ArXiv e-prints}, September 2010.

\bibitem{Korpelevich76}
G.M. Korpelevich.
\newblock An extragradient method for finding saddle points and for other
  problems.
\newblock {\em Ekonom. Mat. Metody}, 12(4):747--756, 1976.

\bibitem{Lieutaud69}
J.~Lieutaud.
\newblock {\em Approximation d'Op\'erateurs par des M\'ethodes de
  D\'ecomposition}.
\newblock PhD thesis, Universit\'e de Paris, 1969.

\bibitem{Lions78}
P.~L. Lions.
\newblock Une m{\'e}thode it{\'e}rative de r{\'e}solution d'une in{\'e}quation
  variationnelle.
\newblock {\em Israel Journal of Mathematics}, 31(2):204--208, 1978.

\bibitem{LionsMercier79}
P.~L. Lions and B.~Mercier.
\newblock Splitting algorithms for the sum of two nonlinear operators.
\newblock {\em SIAM Journal on Numerical Analysis}, 16(6):964--979, 1979.

\bibitem{Figueiredo10a}
M.~A. T.~Figueiredo M.~V.~Afonso, J. M. Bioucas-Dias.
\newblock Fast image recovery using variable splitting and constrained
  optimization.
\newblock {\em {IEEE} Transactions on Image Processing}, 2010.

\bibitem{Mallat99}
S.~Mallat.
\newblock {\em {A Wavelet Tour of Signal Processing, Third Edition}}.
\newblock Academic Press, 2008.

\bibitem{Mercier79}
B.~Mercier.
\newblock Topics in finite element solution of elliptic problems.
\newblock {\em Lectures on Mathematics}, 63, 1979.

\bibitem{Minty62}
O.~J. Minty.
\newblock Montone (nonlinear) operators in {H}ilbert space.
\newblock {\em Duke Math. J}, 29(3):341--346, 1962.

\bibitem{MonteiroSvaiter10b}
R.~D.~C. Monteiro and B.~F. Svaiter.
\newblock Complexity of variants of {T}seng's modified forward-backward
  splitting and {K}orpelevich's methods for generalized variational
  inequalities with applications to saddle point and convex optimization
  problems.
\newblock Technical Report GA 30332-0205, School of Industrial and Systems
  Engineering, Georgia Institute of Technology, Atlanta, 2010.
\newblock Submitted to SIAM Journal on Optimization.

\bibitem{MonteiroSvaiter10}
R.~D.~C. Monteiro and B.~F. Svaiter.
\newblock Iteration-complexity of block-decomposition algorithms and the
  alternating minimization augmented {L}agrangian method.
\newblock {\em submitted}, 2010.

\bibitem{Moreau65}
J.-J. Moreau.
\newblock Proximit\'e et dualit\'e dans un espace hilbertien.
\newblock {\em Bull. Soc. Math. France}, 1965.

\bibitem{Nesterov07}
Y.~Nesterov.
\newblock Gradient methods for minimizing composite objective function.
\newblock CORE Discussion Papers 2007076, Universit{\'e} catholique de Louvain,
  Center for Operations Research and Econometrics (CORE), Sep 2007.

\bibitem{Passty79}
G.~B. Passty.
\newblock Ergodic convergence to a zero of the sum of monotone operators in
  {H}ilbert space.
\newblock {\em Journal of Mathematical Analysis and Applications},
  72(2):383--390, 1979.

\bibitem{PeacemanRachford55}
D.~W. Peaceman and H.~H. Rachford.
\newblock The numerical solution of parabolic and elliptic differential
  equations.
\newblock {\em Journal of the Society for Industrial and Applied Mathematics},
  3(1):pp. 28--41, 1955.

\bibitem{PesquetPustelnik11}
J.-C. Pesquet and N.~Pustelnik.
\newblock A parallel proximal optimization method.
\newblock {\em preprint}, 2011.

\bibitem{Phelps93}
R.~R. Phelps.
\newblock {\em Convex Functions, Monotone Operators and Differentiability}.
\newblock Lecture Notes Math. Springer-Verlag, second edition edition, 1993.

\bibitem{Popov80}
L.D. Popov.
\newblock A modification of the {A}rrow-{H}urwitz method of search for saddle
  points.
\newblock {\em Mat. Zametki}, 28(5):777--784, 1980.

\bibitem{Pustelnik11}
N.~Pustelnik, C.~Chaux, and J.-C. Pesquet.
\newblock Parallel proximal algorithm for image restoration using hybrid
  regularization.
\newblock {\em to appear in IEEE Transactions on Image Processing}, November
  2011.

\bibitem{Rockafellar70}
R.~T. Rockafellar.
\newblock {\em Convex Analysis}.
\newblock Princeton University Press, 1970.

\bibitem{ROF92}
L.~I. Rudin, S.~Osher, and E.~Fatemi.
\newblock Nonlinear total variation based noise removal algorithms.
\newblock {\em Phys. D}, 60(1--4):259--268, November 1992.

\bibitem{Solodov04}
M.~V. Solodov.
\newblock A class of decomposition methods for convex optimization and monotone
  variational inclusions via the hybrid inexact proximal point framework.
\newblock {\em Optim. Methods Softw.}, 19:557--575, 2004.

\bibitem{Spingarn83}
J.~E. Spingarn.
\newblock Partial inverse of a monotone operator.
\newblock {\em Applied Mathematics \& Optimization}, 10(1):247--265, 1983.

\bibitem{FadiliStarckBook09}
J.-L Starck, F.~Murtagh, and M.J. Fadili.
\newblock {\em Sparse Signal and Image Processing: Wavelets, Curvelets and
  Morphological Diversity}.
\newblock Cambridge University Press, Cambridge, UK, 2010.
\newblock in press.

\bibitem{Tseng91}
P.~Tseng.
\newblock Applications of splitting algorithm to decomposition in convex
  programming and variational inequalities.
\newblock {\em SIAM J. Control Optim.}, 29(1):119--138, January 1991.

\bibitem{Tseng97}
P.~Tseng.
\newblock Alternating projection-proximal methods for convex programming and
  variational inequalities.
\newblock {\em SIAM Journal on Optimization}, 7(4):951--965, 1997.

\bibitem{Tseng00}
P.~Tseng.
\newblock A modified forward-backward splitting method for maximal monotone
  mapping.
\newblock {\em SIAM J. Control Optim.}, 38(2), 2000.

\bibitem{Tseng08}
P.~Tseng.
\newblock On accelerated proximal gradient methods for convex-concave
  optimization.
\newblock {\em submitted to SIAM Journal on Optimization}, 2008.

\bibitem{yuan-group-lasso}
M.~Yuan and Y.~Lin.
\newblock Model selection and estimation in regression with grouped variables.
\newblock {\em J. of The Roy. Stat. Soc. B}, 68(1):49--67, 2006.

\bibitem{Zarantonello71}
E.~H. Zarantonello.
\newblock I. projections on convex sets, contributions to nonlinear functional
  analysis.
\newblock In {\em Projections on convex sets in {H}ilbert space and spectral
  theory}, pages 237--341. Academic Press, 1971.

\end{thebibliography}

\end{document}